\newtheorem{theorem}{Theorem}[section]
\newtheorem{def.}[theorem]{Definition}
\newtheorem{prop.}[theorem]{Proposition}
\newtheorem{lem.}[theorem]{Lemma}
\newtheorem{cor.}[theorem]{Corollary}
\newtheorem{conj.}[theorem]{Conjecture}
\newtheorem{Bsp.}{Example}[section]
\newtheorem{rem.}{Remarks}[theorem]
\newtheorem{rem}{Remark}[theorem]
\newenvironment{proof}{\noindent \bf Proof: \rm}{$ \hspace{\stretch{1}} \Box $}
 \newcommand{\bea}{\begin{eqnarray}}
\newcommand{\ena}{\end{eqnarray}}
\newcommand{\beano}{\begin{eqnarray*}}
\newcommand{\enano}{\end{eqnarray*}}
\newcommand{\bei}{\begin{itemize}}
\newcommand{\eni}{\end{itemize}}
\newcommand{\be}{\begin{equation}}
\newcommand{\en}{\end{equation}}
\newcommand{\belem}{\begin{lem.}}
\newcommand{\enlem}{\end{lem.}}
\newcommand{\berem}{\begin{rem} \rm}
\newcommand{\enrem}{\end{rem}}
\newcommand{\norm}[2]{
\left\| #2 \right\|_{#1}
}
\newcommand{\Hil}[0]{
\mathcal{H} 
}
\newcommand{\h}[0]{
\mathcal{H} 
}
\newcommand{\CC}[0]{
{\mathbb C}
}
\renewcommand{\le}{\leqslant}
\renewcommand{\leq}{\leqslant}
\renewcommand{\geq}{\geqslant}
\newcommand{\nn}{\nonumber}
 \newcommand{\noi}{\noindent}
\newcommand{\dis}{\displaystyle}
 \newcommand{\ov}{\overline}
\newcommand{\RN}{\mathbb{R}}
\newcommand{\CN}{{\mathbb C}}
\newcommand{\ip}[2]{\langle {#1},{#2}\rangle}
\newcommand{\dom}{{\sf Dom}}
\newcommand{\ran}{{\sf Ran}}
\def\h{{\mathfrak H}}
\def\s{{\mathfrak S}}
\newcommand{\NN}[0]{\mathbb{N}}
\newcommand{\ud}{\,\mathrm{d}}
\newcommand{\up}{\raisebox{0.7mm}{$\upharpoonright$}} 
\def\<{\langle}
\def\>{\rangle}
\begin{document}
\title{Frames and Semi-Frames}
\author{Jean-Pierre Antoine\thanks{Institut de Recherche en Math\'ematique et  Physique, Universit\'e catholique de Louvain,
B - 1348   Louvain-la-Neuve, Belgium; jean-pierre.antoine@uclouvain.be}
\, and Peter Balazs\thanks{Acoustics Research Institute, Austrian Academy of Sciences, Wohllebengasse 12-14,  
A-1040 Vienna, Austria; peter.balazs@oeaw.ac.at}
}
\maketitle

 \begin{abstract} 
  Loosely speaking, a semi-frame is a generalized frame for which one of the frame bounds is absent.
More precisely, given a total sequence in a Hilbert space, 
we speak of an upper (resp. lower) semi-frame if  only the upper (resp. lower)  frame bound is valid. 
Equivalently, for an  upper  semi-frame, the frame operator is bounded, but has an unbounded inverse, 
whereas  a lower  semi-frame  has an unbounded frame operator, with bounded inverse.
  We study mostly upper  semi-frames, both in the continuous case and in the discrete case, and give some remarks for the dual situation.
In particular, we show that reconstruction is still possible in certain cases.
\end{abstract}

\noi PACS numbers: 02.30.Lt, 02.30.Mv, 02.70.-c, 02.90.+p

\section{Introduction}
\label{sec:intro}

The notion of frame was introduced in 1952 by Duffin and Schaefer \cite{duffschaef1} in the context of nonharmonic analysis. It was revived by 
Daubechies, Grossmann and Meyer \cite{daub-painless} in the early stages of wavelet theory and then became a very popular topic,
 in particular in Gabor and wavelet analysis  \cite{daubech1,Casaz1,ole1,Groech1}. The reason is that a good frame in a Hilbert space is almost as good as an orthonormal 
basis for expanding arbitrary elements (albeit non-uniquely) and is often easier to construct.  
 In order to put the present work in perspective, we recall that  a sequence $\Psi = (\psi_k)$ is a frame for a Hilbert space
$\Hil$ if there exist constants  ${\sf m}>0$ and ${\sf M}<\infty$ such that
\be
{\rm\sf m} \norm{}{f}^2 \le  \sum _{k\in \Gamma} \left| \ip {\psi_k}{f} \right|^2  \le {\rm\sf M}  \norm{}{f}^2 ,  \forall \, f \in \Hil.
\label{eq:discr-bddframe}
\end{equation}
 
 All the above concerns sequences, as required in numerical analysis. However, in the meantime, more general objects, called continuous frames, 
emerged in the context of the theory of (generalized) coherent states  and were thoroughly studied by Ali, Gazeau and one of us   
  \cite{jpa-sqintegI,jpa-contframes,jpa-CSbook} (they were introduced independently by Kaiser \cite{kaiser}).
They were studied further by Askari-Hemmat  \emph{et al.} \cite{askari},  Gabardo and Han \cite{gabardo-han},  Rahimi \emph{et al.} \cite{rahimi}, 
 Fornasier and Rauhut \cite{forn-rauhut}, 
and by Zakharova \cite{zakharova} (under the name `integral frames').
 Actually the discrete frames are just a particular case. In that respect, in particular, Fornasier and Rauhut study in great detail the problem of discretization of a continuous frame, by sampling 
the base space $X$.
 
Before proceeding, we list our definitions and  conventions. The framework is
 a (separable) Hilbert space $\Hil$, with the inner product $\ip{\cdot}{\cdot}$ linear in the second factor.
Given an operator $A$ on $\Hil $, we denote its domain by $\dom(A)$, its range by $\ran(A)$  or, shorter, $R_A$, and its kernel by ${\sf Ker}(A)$.
 An operator $A$ in $\Hil$ is called \emph{positive}, resp.  \emph{nonnegative}, if $\ip{h}{Ah}> 0$,  resp. $\ip{h}{Ah} \geq 0, \, \forall\, h\neq 0, h\in \dom(A)$.
We call an operator $A$ \emph{invertible}, if it is invertible as a function from $\dom(A)$ to $\ran(A)$, i.e. if it is injective. 
$GL(\Hil)$ denotes the set of all bounded operators on $\Hil$ with bounded inverse.

Let   $X$ be a  locally compact space with measure $\nu$. We assume that $X$ is    $\sigma$-compact, that is,
 $X = \bigcup_{n} K_{n}, K_{n} \subset  K_{n+1}, K_{j}$ relatively compact. 
Let  $\Psi:=\{\psi_{x},\, x\in X\}$ be a family of vectors  
from a Hilbert space $\Hil$ indexed by points of $X$. 
Then we say that  $\Psi$ is a set of coherent states    or a \emph{generalized frame} if the map  $x \mapsto \ip{f}{\psi_{x}}$  
is measurable for all $f \in \Hil$ and
$$
\int_{X}\ip{f}{\psi_{x}}\ip {\psi_{x}}{f'}\, \ud\nu(x) = \ip{f}{Sf'}, \; \forall\, f, f' \in \Hil,
$$
where $S$ is a bounded, positive, self-adjoint, invertible operator on $\Hil $, called the frame operator
\cite{jpa-sqintegI}-\cite{jpa-CSbook} (several (mathematical) authors \cite{gabardo-han, rahimi}
  call \emph{frame} the map $\breve\psi: X\to \Hil$ given as $ \breve\psi (x) =  \psi_{x}$). 
 In Dirac's notation, the frame operator $S$ reads
$$
S = \int_{X}|\psi_{x}\rangle \langle\psi_{x}| \ud x.
$$

The operator $S$ is invertible, but its inverse $S^{-1}$, while still self-adjoint and positive, need not be bounded.
Thus, we say that   $\Psi$ is a \emph{frame} if $ S^{-1}$ is bounded or, equivalently,
if there exist constants  ${\sf m} > 0$  and ${\sf M}<\infty$  (the  frame bounds) such that 
\be\label{eq:frame}
{\sf m}  \norm{}{f}^2 \le \ip{f}{Sf} = \int_{X}  |\ip{\psi_{x}}{f}| ^2 \, \ud \nu(x) \le {\sf M}  \norm{}{f}^2 ,  \forall \, f \in \Hil.
\end{equation}
 For frames the  spectrum {\sf Sp}$(S)$ of $S$   is contained in the interval $ [{ \sf m}, {\sf M}]$, 
these two numbers being the infimum and the supremum of {\sf Sp}$(S)$, respectively.

These definitions are completely general. In particular, if $X$ is a discrete set with $\nu$ being a counting measure, 
we recover the standard definition \eqref{eq:discr-bddframe} of a (discrete) frame \cite{duffschaef1,Casaz1,ole1}.
In that case too, one defines the frame operator $S$ which is bounded, self-adjoint, positive, invertible, and so is its inverse $S^{-1}$. 
However, there are situations where the notion of frame is too restrictive, in the sense that one cannot satisfy \emph{both} frame bounds simultaneously.
Thus there is room for two natural generalizations. We will say that
a family  $\Psi$ is   an \emph{upper (resp. lower) semi-frame}, if
\bei
\vspace*{-1mm}\item[(i)]  {it is total in $\Hil$;}
\vspace*{-2mm}\item[(ii)] {it satisfies the upper (resp. lower) frame inequality.}
\eni
\vspace*{-1mm} Note that the lower frame inequality automatically implies  that the family is total, i.e. (ii) $\Rightarrow$ (i) for a  lower semi-frame.
Also, in the upper case, $S$   is bounded and $S^{-1}$ is unbounded, whereas, in the lower case,  $S$   is unbounded and $S^{-1}$ is bounded.
We may also remark that a discrete upper   semi-frame is nothing but a total Bessel sequence   
(in the frame community, this is called a \emph{complete} Bessel sequence, but the word is ambiguous). 
These are the notions we want to   extend to the general case.

Let us go back to  the continuous case.  If one has
\be\label{eq:upframe}
0 < \int_{X}  |\ip{\psi_{x}}{f}| ^2 \, \ud \nu(x)   \le {\rm\sf M}  \norm{}{f}^2 ,  \forall \, f \in \Hil, \, f\neq 0,
\end{equation}
then $\Psi$ is called a (continuous) \emph{upper semi-frame}. 
 In this case, $S^{-1}$ is unbounded, with dense domain $\dom(S^{-1})$.
(In the terminology of \cite{gabardo-han}, the corresponding mapping $\breve\psi$ could be called  a \emph{positive  Bessel mapping}.
 In our previous work, this object was called an \emph{unbounded frame},
but this terminology is somewhat counterintuitive, since  an unbounded frame is not a frame!)

{By symmetry (in fact, duality, as we will see below),    we  will speak of a \emph{lower semi-frame} if the upper frame bound is missing. Note that, since  $S$ may now be unbounded,
a lower semi-frame is no longer a coherent state. We will come back to these matters in Section \ref{subsec:upperlower}.}

In the present paper, we will study mostly upper  semi-frames and only give some remarks for the dual situation. In particular, we show that reconstruction is still possible.
We will first cover the general  (continuous) case, then particularize the results to the discrete case, as required in practice.

\section{Continuous frames revisited}
\label{sec:cont-frames}

\subsection{Continuous frames}
\label{subsec:bdd-frames}

In order to get a feeling for the general situation, we begin by quickly recalling the main results in the  (standard) 
case of a frame, where  $S$ and $S^{-1}$ are both bounded
\cite{jpa-sqintegI,jpa-contframes,jpa-CSbook}. Note that we use here the term ``continuous frame" which is now well established in the literature. 
However, the case envisaged here is general, since it contains the discrete case as well, when $X$ is a discrete set and $\nu$ a counting measure.
Note, however, that in the continuous case treated here, most results concern only \emph{weak} convergence, whereas in the discrete case (Section \ref{sec: discreteframes}), 
one usually requires results about \emph{strong} convergence. 

First,  $\Psi$ is a total set in $\Hil$. Next 
define the   (coherent state) map  $C_{\Psi}: \Hil \to L^{2}(X, \ud\nu)$ by  
\be
(C_{\Psi}f)(x) =\ip{\psi_{x}}{f} , \; f \in \Hil.
\label{eq:csmap}
\end{equation}
(This map was denoted $W_{\Psi}$ in the previous works 
\cite{jpa-sqintegI,jpa-contframes,jpa-CSbook}, but the present notation is closer to the one used in frame theory.)
Mimicking the terminology of frame theory, we may call $C_{\Psi}$ the \emph{analysis operator}. Its adjoint
$C_{\Psi}^\ast: L^{2}(X, \ud\nu) \to \Hil$, 
called the \emph{synthesis operator}, reads (the integral being understood in the weak sense,  as usual \cite{forn-rauhut})
\be
C_{\Psi}^\ast F =  \int_X  F(x) \,\psi_{x} \; \ud\nu(x), \mbox{ for} \;\;F\in L^{2}(X, \ud\nu)  
\label{eq:synthmap}
\end{equation}
Then $C_{\Psi}^* C_{\Psi} = S$ and $ \|C_{\Psi} f\|^2_{L^{2}(X)}= \| S^{1/2}f\|_{\Hil}^2 = \ip{f}{Sf}$.
 Furthermore,  $C_{\Psi}$ is injective, since $S>0$,  so that $ C_{\Psi}^{-1} :{ \sf Ran} (C_{\Psi}) \to \Hil$ is well-defined.

Next,  \ran$(C_{\Psi})$  is a \emph{closed} subspace of $L^{2}(X, \ud\nu )$,  as follows from the lower frame bound, which implies that 
$S^{-1}$ is bounded.
 The corresponding orthogonal projection can be computed as follows. First we define the (Moore-Penrose) pseudo-inverse of  $C_{\Psi}$, namely, 
$$
 C_{\Psi}^+ := (C_{\Psi}^* C_{\Psi})^{-1} C_{\Psi}^* = S^{-1} C_{\Psi}^*.
 $$
 This is indeed a left inverse of $C_{\Psi}$, i.e. $C_{\Psi}^+C_{\Psi} = I$.
Then the operator ${\mathbb P}_{\Psi}:  L^{2}(X, \ud\nu) \to \ran(C_{\Psi})$ defined by
$$
{\mathbb P}_{\Psi}:= C_{\Psi} S^{-1} C_{\Psi}^* = C_{\Psi}  C_{\Psi}^+ 
$$
is the orthogonal projection on \ran$(C_{\Psi})$. Indeed, it is self-adjoint and idempotent, and its range is clearly \ran$(C_{\Psi})$.
This  projection is an integral operator with (reproducing) kernel $K(x,y) = \ip{\psi_{x} }{S^{-1}\psi_{y}}$,
i.e.,  \ran$(C_{\Psi})$ is a  {reproducing kernel Hilbert space}.
The interest of this fact is that the elements of  \ran$(C_{\Psi})$ are genuine functions, not equivalence classes. 

 In addition, the subspace   \ran$(C_{\Psi})$ is also  complete in the norm $\| \cdot\|_{\Psi}$, associated to the new scalar product   defined by 
\be
\ip {F}{F'}_{\Psi} := \ip {F}{C_{\Psi} \,S^{-1} \,C_{\Psi}^{-1} F' }_{L^{2}(X)} , \mbox{ for } \; F,F'\in \ran(C_{\Psi}) .
\label{eq:scalarprod}
\end{equation}
Hence it is a Hilbert space, denoted by ${\h}_{\Psi}$. The map    $C_{\Psi}: {\Hil} \to {\h}_{\Psi}$ is unitary, since it  is both isometric and onto. 
One has indeed, for every $F,F' \in   \ran(C_{\Psi})$, 
\bea \label{eq:unitary}
\ip {F}{F'}_{\Psi} &=& \ip {C_{\Psi} f}{C_{\Psi} f'}_{\Psi}= \ip {C_{\Psi} f}{C_{\Psi} \,S^{-1} \,C_{\Psi}^{-1} C_{\Psi} f' }_{L^{2}(X,\ud\nu)} \nn
\\
&=& \ip {C_{\Psi} f}{C_{\Psi} \,S^{-1} \,  f' }_{L^{2}(X,\ud\nu)} \nn
\\
&=& \ip { f}{C_{\Psi}^\ast C_{\Psi} \,S^{-1} \,  f' }_{\Hil} \nn
\\
&=& \ip {f}{f'}_{\Hil}.
\ena 
 $C_{\Psi}$ being a unitary operator, it can be inverted  on its range by the adjoint operator
 $C_{\Psi}^{\ast (\Psi)}: {\h}_{\Psi} \to {\Hil} $,  which is nothing but the pseudo-inverse  $C_{\Psi}^+ = S^{-1} C_{\Psi}^*.$
Thus one gets,  for every $f\in \Hil$, a  {reconstruction formula}, where the integral converges weakly:
\be
f = C_{\Psi}^{-1} F = C_{\Psi}^+ F =  \int_X  F(x) \,S^{-1}\,\psi_{x} \; \ud\nu(x), \mbox{ for} \;\;F= C_{\Psi}f \in {\h}_{\Psi} .
\label{eq:reconstr}
 \end{equation}

\subsection{Continuous upper semi-frames}
\label{subsec:unbdd-frames}

Now let us look at families that satisfy \eqref{eq:upframe}, i.e., upper semi-frames. In this case, the operators $C_{\Psi}$ 
and $S$ are bounded, $S$ is injective and self-adjoint.
Therefore $R_S$ is dense in $\Hil$ and $S^{-1}$ is also self-adjoint. 
$S^{-1}$ is unbounded, with dense domain $\dom(S^{-1}) = R_S$ \cite{jpa-sqintegI,jpa-contframes,jpa-CSbook}.
Once again,  $\Psi$ is a total set in $\Hil$.

One has the following diagram, where  we write $ R_{C}:=   \ran(C_{\Psi})$
\be\label{diagram1}
\begin{array}{cccc}
\Hil       &\stackrel{C_{\Psi}}{\longrightarrow }& R_{C} \subset & \!\!\!\!\ov{R_{C}} \subset  L^{2}(X, \ud\nu) \\[1mm]
\cup&                            & \cup& \\[1mm]
\dom(S^{-1}) = R_{S}   &\stackrel{C_{\Psi}}{\longrightarrow }  &C_{\Psi}(R_{S})\subset & \hspace{-1.3cm} L^{2}(X, \ud\nu)   
\end{array}
\end{equation}
where $\ov{R_{C}}$ denotes the closure of $R_{C}$ in $ L^{2}(X, \ud\nu)  $ (indeed, $R_{C}$ need no longer be closed). 

 Define the Hilbert space ${\h}_{\Psi}:=\ov{C_{\Psi}(R_{S})}^{\Psi} $, where the  
 completion is taken with respect to the norm $\| \cdot\|_{\Psi}$.   
 defined in \eqref{eq:scalarprod}.
 Then, the same calculation as in \eqref{eq:unitary} above shows that the map $C_{\Psi}$, restricted to the dense domain  $\dom(S^{-1}) =   R_{S}$, is an  isometry
 onto  $C_{\Psi}(R_{S}) \subset {\h}_{\Psi}$ :
$$
 \ip {C_{\Psi} f}{C_{\Psi} f'}_{\Psi}=\ip {f}{f'}_{\Hil}, \; \forall\, f,g\in  R_{S}.
 $$
  Thus $C_{\Psi}$ extends by continuity to a  \emph{unitary} map between the respective completions, namely, from $\Hil$ onto ${\h}_{\Psi}:=\ov{C_{\Psi}(R_{S})}^{\Psi} $.

Therefore, $ {\h}_{\Psi} $ and $  R_{C}$ coincide as sets,
 so that  $ {\h}_{\Psi} $ is  a vector subspace (though not necessarily  closed) of  $ L^{2}(X, \ud\nu)$. 
Consider now the operators $G_S = C_{\Psi} \,S \,C_{\Psi}^{-1} : R_{C} \to C_{\Psi}(R_{S})$ and
$G_{S}^{-1}:=C_{\Psi} \,S^{-1} \,C_{\Psi}^{-1} : C_{\Psi}(R_{S}) \to R_{C}$, both acting in the Hilbert space $ \ov{R_C}$.
Then one shows \cite{jpa-sqintegI} that $G_{S} $ is a bounded, positive and symmetric operator, while $G_{S} ^{-1}$ is positive and essentially self-adjoint. 
These two operators are bijective and inverse to each other 
(detailed proofs  will be given for the discrete case in Section \ref{subsec:unbdd-discrframe} below).

{Thus the previous diagram \eqref{diagram1} becomes}
\be\label{diagram2}
\hspace*{-1cm}\begin{minipage}{15cm}
\begin{center}
\setlength{\unitlength}{2pt}
\begin{picture}(100,60)
\put(10,50){$\Hil$}
\put(89,50){${\h}_{\Psi}= R_{C} \subseteq  \ov{R_C}\subseteq L^{2}(X, \ud\nu) $}
\put(20,52){\vector(4,0){65}}
\put(55,54){$C_{\Psi}$}
\put(10,47){\vector(0,-4){30}}
Ê\put(12,17){\vector(0,4){30}}
\put(100,47){\vector(0,-4){30}}
Ê\put(102,17){\vector(0,4){30}}
\put(-35,10){$ \Hil \supseteq \dom(S^{-1})=R_S$}
\put(85,47){\vector(-2,-1){65}}
\put(14,30){$S^{-1}$}
\put(5,30){$S$}
\put(91,30){$G_{S}$}
\put(104,30){$G_{S} ^{-1}$}
\put(93,10){$C_{\Psi}(R_{S}) \subseteq L^{2}(X, \ud\nu)$}
\put(20,12){\vector(4,0){65}}
\put(48,35){$C_{\Psi}^\ast$}
\put(50,5){$C_{\Psi}$}
\end{picture}
\end{center}
\end{minipage}
\end{equation}

Next let   $G = \ov{G_{S}}$ and let $G^{-1}$ be the self-adjoint extension of  $G_{S} ^{-1}$. Both operators are self-adjoint and positive, 
$G$ is bounded and  $G^{-1}$ is densely defined   in $\ov{R_C}$. Furthermore, they are are inverse of each other on the appropriate domains.
 Moreover, since the spectrum of  $G^{-1}$ is bounded away from zero,
the norm $\norm{\Psi}{\cdot}$ is equivalent to the graph norm of {$G^{-1/2} = {\left( G^{-1} \right)}^{1/2}$}, so that
$$
 \ran(G^{1/2}) = \dom(G^{-1/2}) = {\h}_{\Psi} = R_{C} \subset \ov{R_{C}}\subset  L^{2}(X, \ud\nu).
$$ 
Therefore in this case $G^{- 1/2} = {\left( G^{-1} \right)}^{1/2} = {\left( G^{1/2} \right)}^{-1}$.

Since $C_{\Psi}^{-1} : {\h}_{\Psi} \to \Hil$ is unitary,  it is the adjoint of $C_{\Psi}: \Hil\to {\h}_{\Psi}$, denoted $C_{\Psi}^{\ast (\Psi)}$.
 Thus $G_{|_{\h_\Psi}} = C_\Psi S C_{\Psi}^{\ast (\Psi)}$ is a unitary operator, hence
 $ G $ and $G^{-1}$ are unitary images of $S$ and  $S^{-1}$, respectively, thus  $\|G \|_{\Psi}  = \|S\|_\Hil$.

At this point, we   make a distinction. We will say that the upper semi-frame  $\Psi=\{\psi_{x},\, x\in X\}$ is \emph{regular}
 if  all the vectors $\psi_{x}, \, x \in X$, belong to $\dom(S^{-1})$. This will simplify  some  statements below.

Indeed, let us first assume that $\Psi$ is regular. Then the discussion proceeds exactly as in the bounded case. In particular,  
the reproducing kernel $K(x,y) = \ip{\psi_{x} }{S^{-1}\psi_{y}}$ is a \emph{bona fide} function on $X\times X$.
 If  $\Psi$ is not regular, one has to treat the kernel $K(x,y)$ as a bounded sesquilinear  form
 over $ {\h}_{\Psi}$, i.e.,  use the language of distributions (see Section \ref{subsec:G-triplet}).

Under the same condition  of regularity, we obtain the same reconstruction formula as before,  but restricted to the subspace $R_S$,
the integral being understood in the weak sense, as usual:
\be\label{eq:reconstr2}
f = C_{\Psi}^{-1} F = C_{\Psi}^{\ast (\Psi)} F =  \int_X  F(x) \,S^{-1}\,\psi_{x} \; \ud\nu(x),  \;
  \forall\;f\in R_S,\;  F= C_{\Psi} f \in {\h}_{\Psi}.
\end{equation}
The argument goes as follows. 
 
Given $f' \in  R_S = \dom(S^{-1})$, we have, for  any  $F\in \h_{\Psi}\subseteq L^{2}(X, \ud\nu)$,
\begin{align} \label{eq:reconstrcont1}
\ip{C_{\Psi}^{\ast (\Psi)}F}{f'}_{\Hil} &= \ip{F}{C_{\Psi}f'}_\Psi \nn\\
&=\ip{F}{C_{\Psi}S^{-1}C_{\Psi}^{-1}C_{\Psi}f'}_{L^2} \nn\\
&=\ip{F}{C_{\Psi}S^{-1}f'}_{L^2}\nn \\
&=\int \ov{F(x)}\ip{\psi_x}{S^{-1}f'}_{\Hil} \ud\nu(x) 
\end{align}
This is true also for non-regular semi-frames. If we assume regularity and by using the fact that $S^{-1}$ is self-adjoint, we can write,
\begin{equation}\label{sec:regularreconweak1}
\ip{f}{f'}_{\Hil} = \ip{C_{\Psi}^{\ast (\Psi)}F}{f'}_{\Hil} = \int \ov{F(x)}\ip{S^{-1}\psi_x}{f'}_{\Hil} \ud\nu(x),
 \forall\;f\in R_S,\;  F= C_{\Psi} f \in {\h}_{\Psi}.
\end{equation}
 which then yields  \eqref{eq:reconstr2}.
However, it does not seem possible to extend this reconstruction formula to all $f\in\Hil$. A proof is given, in the discrete case, after Proposition \ref{ssminus}.

On the other hand, if the frame is not regular, we  have to turn to distributions, for instance, in terms of a Gel'fand triplet, as we show in the next section.

\subsection{Formulation in terms of a Gel'fand triplet}
\label{subsec:G-triplet}
 
The last remark becomes clearer if we formulate the whole construction in the language of Rigged Hilbert spaces or Gel'fand triplets
\cite{gelf}. Actually, we get here a simpler version, namely a triplet of Hilbert spaces, the simplest form of (nontrivial) partial inner product space 
\cite{jpa-pipspaces}.

According to    \cite[Sec.4]{jpa-contframes} and \cite[Sec.7.3]{jpa-CSbook}, the construction goes as follows.
If  $\Psi$ is regular, the kernel
\be\label{eq:kernel}
K(x,y) = \ip{\psi_{x} }{S^{-1}\psi_{y}}_{\Hil},
\end{equation}
is a \emph{bona fide} function on $X\times X$.
Indeed,  \eqref{eq:kernel},  together with \eqref{eq:reconstrcont1}, yields
\be\label{eq:sesqform1}
\iint_{X\times X} \ov{F(x)}K(x,y)F'(y)\; \ud\nu(x) \; \ud\nu(y) = \ip{C_{\Psi}^{-1} F}{SC_{\Psi}^{-1}F'}_\Hil, \; \forall\, F,F'\in {\h}_{\Psi}.
\end{equation}
Since $C_{\Psi}$ is an isometry and $S$ is bounded, the relation \eqref{eq:sesqform1} defines a bounded sesquilinear form over ${\h}_{\Psi}$, namely
\be\label{eq:sesqform2}
K^\Psi(F,F') = \ip{C_{\Psi}^{-1} F}{SC_{\Psi}^{-1} F'}_\Hil,
\end{equation}
and this remains  true even  if  $\Psi$ is not regular.  
Denote by ${\h}_{\Psi}^{\times}$ the Hilbert space obtained by completing  ${\h}_{\Psi}$ in the norm given by this sesquilinear form.
Now, \eqref{eq:sesqform1} and \eqref{eq:sesqform2}  imply that 
$$ 
 K^\Psi(F,F') = \ip{C_{\Psi}^{-1} F}{SC_{\Psi}^{-1} F'}_\Hil = \ip{F}{C_{\Psi} SC_{\Psi}^{-1} F'}_\Psi = \ip{F}{F'}_{L^2} .
$$
Therefore, one obtains, with continuous and dense range embeddings,
\be
 {\h}_{\Psi} \;\subset\; {\h}_{0}\;\subset \; {\h}_{\Psi}^{\times},
\label{eq:triplet}
\end{equation}
where 
\begin{itemize}
\vspace{-1mm}\item[{\bf .}]  $ {\h}_{\Psi} = R_{C}$, which is a Hilbert space for the norm 
$\norm{\Psi}{\cdot}=\ip {\cdot}{ G^{-1}  \cdot }^{1/2}_{L^2}$; 
\vspace{-1mm}\item[{\bf .}]   ${\h}_{0} =\ov{{\h}_{\Psi}} = \ov{R_{C}}$ is the closure of ${\h}_{\Psi}$ in $L^{2}(X, \ud\nu)$;
\vspace{-1mm}\item[{\bf .}]  
${\h}_{\Psi}^{\times}$ is 
the completion of  ${\h}_{0}$  (or ${\h}_{\Psi}$) in the norm $\norm{\Psi^{\times}}{\cdot}:=\ip {\cdot}{G \cdot }^{1/2}_{L^2}$.
\end{itemize}
 The notation in \eqref{eq:triplet} is coherent, since  the space $\h^{\times}_{\Psi}$ just constructed is the conjugate dual of ${\h}_{\Psi}$, 
 i.e. the space of conjugate linear functionals on   ${\h}_{\Psi}$ 
(we use the conjugate dual instead of the dual, in order that all embeddings in \eqref{eq:triplet} be linear). 
Indeed, since $K^\Psi$ is a bounded sesquilinear form over  ${{\h}_{\Psi}}$, the relation $ X_F= \ov {K^\Psi(F,\cdot)}$
 defines, for each $F\in{\h}_{\Psi}$,  an element $X_F$ of the conjugate dual of  ${\h}_{\Psi}$ (note that  $X_F$ depends linearly on $F$). 
If, on these elements, we define the inner product
 $$
 \ip{X_F}{X_{F'}}_{{\Psi}^{\times}} = \ip{C_{\Psi}^{-1} F}{SC_{\Psi}^{-1} F'}_\Hil = K^\Psi(F,F')
 $$
 and take the completion, we obtain precisely the Hilbert space ${\h}_{\Psi}^{\times}$. Thus \eqref{eq:triplet} is a Rigged Hilbert space or a Gel'fand triplet, 
more precisely a Banach (or Hilbert) Gel'fand triple in the terminology of Feichtinger \cite{fei-zimm}.
 
 Now, given any element $X\in{\h}_{\Psi}^{\times}$, we easily obtain by a limiting procedure that
 $$
 X(F) = \ip{X_F}{X}_{{\Psi}^{\times}} = \ip{\ov {K^\Psi(F,\cdot)}}{X}_{{\Psi}^{\times}},
 $$
 which expresses the reproducing property of the kernel $K^\Psi$ as a function over $\ov{{\h}_{\Psi}} \times {\h}_{\Psi}$.
Clearly,  ${\h}_{\Psi}^{\times}$  {materializes the unbounded character of the upper semi-frame.}

 Let us come back now to the relation \eqref{eq:reconstrcont1}, that is,
$$
 \ip{f}{f'}_{\Hil}=\ip{F}{C_{\Psi}S^{-1}f'}_{L^2}, \; f\in \Hil, \, f'\in R_{S}.
$$
Define $F':=C_{\Psi}S^{-1}f' \in {\h}_{\Psi}$. If $\Psi$ is not regular, we can associate to  $F'$   an element  $X_{F'}\in {\h}_{\Psi}^{\times}$, namely,
$$
X_{F'}(F) = K^\Psi(F,F') = \ip{F}{F'}_{L^2} = \ip{f}{f'}_{\Hil}.
$$
Although these equations give some way of inverting the analysis operator, they don't give explicit reconstruction formulas.

If $S^{-1}$ is bounded, that is, in the case of a frame, the three Hilbert spaces of \eqref{eq:triplet} coincide as sets, with equivalent norms,
 since then both $S$ and $S^{-1}$  belong to $GL(\Hil)$.

Finally, if $\Psi$ is regular, all three spaces $ {\h}_{\Psi},{\h}_{0}, {\h}_{\Psi}^{\times}$ are  {reproducing kernel Hilbert spaces}, with the same kernel
  $K(x,y) = \ip{\psi_{x} }{S^{-1}\psi_{y}}$. 

\subsubsection{An alternative}

A possibility to have a more general reconstruction formula is the following. In the triplet \eqref{eq:triplet}, the `small' space ${\h}_{\Psi}$ is the form domain of 
$ G^{-1}$, with norm $\norm{\Psi}{\cdot}=\ip {\cdot}{ G^{-1}  \cdot }^{1/2}_{L^2}$.  On the side of $\Hil$, i.e., on the l.h.s. of the diagram \eqref{diagram2},
this corresponds to the form domain of $S^{-1} $, with norm $\norm{\Psi}{\cdot}'=\ip {\cdot}{S^{-1}  \cdot }^{1/2}_{\Hil}$. We can instead consider the smaller space 
  $R_S = \dom(S^{-1}) $, with norm $\norm{\s}{\cdot}=\ip {S^{-1} \cdot}{S^{-1}  \cdot }^{1/2}_{\Hil}$. The resulting space, denoted $\s$, is complete, hence a Hilbert space.
Thus we get a new triplet
\be\label{eq:newtriplet}
 \s \subset \Hil \subset  \s^{\times}.
\end{equation}
Mapping everything into $L^2$ by $C_{\Psi}$, we obtain the following scale of  Hilbert spaces:
\be
\h_{\s} \subset \h_{\Psi} \subset \h_{0} = \ov{R_C} \subset {\h}_{\Psi}^{\times} \subset {\h}_{\s}^{\times}.
\end{equation}
In this relation,   $\h_{\s}:= C_{\Psi} \s=  C(R_{S})$, with norm $\norm{\Hil}{G^{-1}f}$. In the new triplet \eqref{eq:newtriplet}, the operator $S^{-1}$ is isometric from
$ \s$ onto $\Hil $ and, by duality, from $\Hil $ onto $ \s^{\times}$ .
The benefit of that construction is that the relation \eqref{sec:regularreconweak1} is now valid for any  $f\in \Hil$, even if $\Psi$ is non-regular, but of course, we still need 
that $f'\in \s = R_{S}$. In other words, we obtain  a reconstruction formula in the sense of distributions, namely,
\be\label{sec:regularreconweak2}
f = \int \ov{F(x)}\,S^{-1}\psi_x \ud\nu(x), \; f \in \s^{\times}.
\end{equation}

\subsection{Upper and lower semi-frames}
\label{subsec:upperlower}

Given a frame $\Psi = \{\psi_{x}\}$, with frame bounds ({\sf m}, {\sf M}) and frame operator $S$,
it is well-known that the family  $\widetilde\Psi = \{\widetilde\psi_{x}:= S^{-1}\psi_{x}\}$ is also a frame, with bounds $({\sf M}^{-1}, {\sf m}^{-1}) $
and frame operator $S^{-1}$, called the
\emph{canonical dual} of $\Psi$. It follows that the reconstruction formula \eqref{eq:reconstr} may be written as (the integrals being understood in the weak sense, as usual)
\be
f = \int_X  \ip{\psi_{x}}{f} \,\widetilde\psi_{x} \; \ud\nu(x) = \int_X  \ip{\widetilde\psi_{x}}{f} \,\psi_{x} \; \ud\nu(x) . 
\label{eq:dualreconstr}
 \end{equation}
More generally, one says \cite{zakharova} that a frame $\{\chi_{x}\}$ is \emph{dual} to the frame $\{\psi_{x}\}$ if one has, for every $f\in\Hil$,
\be
f = \int_X  \ip{\chi_{x}}{f} \,\psi_{x} \; \ud\nu(x).
\label{eq:dual}
 \end{equation}
It follows that the frame $\{\psi_{x}\}$ is dual to the frame  $\{\chi_{x}\}$. We want to extend this notion to semi-frames.

First we notice \cite{gabardo-han} that an  upper semi-frame  $\Psi$ is a frame if and only if there exists another  upper semi-frame  $\Phi$ which is dual to $\Psi$, in the sense that
$$
\ip{f}{f'} = \int_X  \ip{\phi_{x}}{f} \,\ip{\psi_{x}}{f'} \; \ud\nu(x), \; \forall\, f,f' \in \Hil. 
$$

Now, an upper semi-frame  $\Psi$ corresponds formally to ${\sf m} \to 0$ in \eqref{eq:frame}, thus, it yields $S$ bounded,  $S^{-1}$ unbounded. 
Thus the `dual' $\widetilde\Psi$  should be a family satisfying
 the lower frame condition (no finite upper bound, $M \rightarrow \infty$), which would then  correspond to $S$ unbounded and  $S^{-1}$ bounded. 
As we will see in the sequel, this idea is basically correct, with some minor qualifications.
Indeed, there is perfect symmetry (or duality) between two classes of {total}  families, namely,  the upper  semi-frames  and the lower  semi-frames.

Let first $\Psi = \{\psi_{x}\}$ be an arbitrary total family in $\Hil$.
Then we define the analysis operator $C_{\Psi}: \dom(C_{\Psi})\to L^{2}(X, \ud\nu)$ as $C_{\Psi}f(x) = \ip{\psi_{x}}{f}$ on the domain
$$
\dom(C_{\Psi}):= \{f\in\Hil : \int_{X}  |\ip{\psi_{x}}{f}| ^2 \, \ud \nu(x) <\infty\}.
$$
In parallel to the discrete case, \cite[Lemma 3.1]{casoleli1}, we can state:
\belem
Given any total family $\Psi =\{\psi_{x}\}$, the associated analysis operator $C_{\Psi}$ is closed. Then $\Psi $ satisfies the lower frame condition
 if and only if $C_{\Psi}$ has closed range and is injective.
\label{lem:analop}
\enlem
\begin{proof}
To show that $C_{\Psi}$ is closed, we   have to show that, if $f_n \rightarrow f$ and $C_{\Psi} f_n \rightarrow g$, then  $f\in\dom(C_{\Psi})$ and $C_{\Psi} f = g$.
If $f_n \rightarrow f$, then this sequence is also weakly convergent.
 In particular, for $n\to\infty$ and almost all  $x $, we have  $C_{\Psi} f_n(x) = \ip{\psi_x} {f_n} \to \ip {\psi_x}{f}=C_{\Psi} f(x) $.
 As by assumption $g \in L^2$, this implies that   $ \ip {\psi_x}{f}  \in L^2$, that is, $f\in\dom(C_{\Phi})$ and  $C_{\Psi} f = g $.  

Next, the existence of the lower frame bound implies that $C_{\Psi}$ is injective, hence invertible.
Since $C_{\Psi}$ is closed, $C_{\Psi}^{-1}$ is closed. Thus, by the closed graph theorem \cite{conw1} or \cite[Theor. 5.6]{weidmann}, 
$C_{\Psi}$ has closed range if and only if 
 $C_{\Psi}^{-1}$ is continuous on $\ran(C_{\Psi})$, which is  equivalent to the existence of a lower frame bound.
\end{proof}
\medskip

Next,    we define the synthesis operator $D_{\Psi}: \dom(D_{\Psi})\to\Hil $ as 
\be
D_{\Psi}F =  \int_X  F(x) \,\psi_{x} \; \ud\nu(x),  \;\;F\in \dom(D_{\Psi}) \subset L^{2}(X, \ud\nu) ,
\label{eq:synthmap2}
\end{equation}
on the domain 
$$
\dom(D_{\Psi})  := \{F\in L^{2}(X, \ud\nu)  : \int_{X} F(x) \,\psi_{x}  \, \ud \nu(x)  \mbox{ converges weakly in $\Hil$ }\}.  
$$
Then we have, as in the discrete case \cite[Lemma 3.1 and Prop. 3.3]{BSA}:
\belem\label{lem:locinteg}
If the   total family $\Psi =\{\psi_{x}\}$ is  such that the function $x\mapsto \ip{\psi_{x}}{f}$ is locally integrable for all $f\in\Hil$, then the operator 
$D_{\Psi}$ is densely defined and one has $C_{\Psi} = D_{\Psi}^\ast$.
\enlem
\begin{proof}
Since $x\mapsto \ip{\psi_{x}}{f}$ is locally integrable, 
 the domain $\dom(D_{\Psi})$ contains the characteristic functions of all compact subsets of $X$, which are dense in $L^{2}(X, \ud\nu)$,
 thus  $D_{\Psi}$  is densely defined.

Next, for any $F\in\dom(D_{\Psi})$ and $f\in \dom(C_{\Psi})$, we have
$$ 
\ip{D_{\Psi}F}{f}_{\Hil}  = \int_{X}\ov{F(x)}\,\ip{\psi_{x}}{f}\, \ud \nu(x) = \ip{F}{C_{\Psi}f}_{L^2},
$$
which implies that  $C_{\Psi} \subseteq D_{\Psi}^\ast$.

It remains to show that $ \dom(D_{\Psi}^\ast) \subseteq \dom(C_{\Psi}) $.
Fix $f \in \dom(D_{\Psi}^*)$. This means that ${\mathfrak d} (F) := \ip {f}{D_{\Psi} F} $, is a bounded functional on $\dom(D_{\Psi})$.  
Since $\dom(D_{\Psi})$ is dense in $L^2$, there is a unique bounded extension $\ov{\mathfrak d} : L^2 \rightarrow \CC$. 
For $F\in L^2$, we denote by $F_{n}:=F \, \chi_{K_{n}} $ its restriction to $K_{n}$, where  $\chi_{K_{n}}$ is the characteristic function of $K_{n}$ and
$X = \bigcup_{n} K_{n}, K_{n} \subset  K_{n+1}, \,K_{n}$ relatively compact. 
Thus, $F_{n}  \in\dom(D_{\Psi})$.
Clearly, $F_{n} \to F$ in $L^2$-norm as $n\to\infty$. Hence, $\ov{\mathfrak d}  (F_{n}) \to\ov{\mathfrak d} (F)$
 as $n\to\infty$. Therefore,  
$$
\ov{\mathfrak d} (F_{n})= \int_{K_{n}} \ov{F(x)} \,\ip{\psi_k}{f} \, \ud \nu(x)  \stackrel{n\to\infty}{\longrightarrow}
\int_{X} \ov{F(x)} \,\ip{\psi_k}{f}, \; \mbox{ for all }F\in L^2(X, \ud\nu). 
$$
Since $L^2$  is its own K\"{o}the dual (see \cite{lux} or \cite[Sec.4.4]{jpa-pipspaces}), this implies that $ \ip{\psi_x}{f}   \in  L^2(X, \ud\nu)$, 
which proves that $f$ belongs to $\dom(C_{\Psi})$. 
\end{proof}
\medskip

Note that the condition of local integrability is certainly satisfied for every $f\in\dom(C_{\Psi})$, but not necessarily for every $f\in\Hil$. It is always satisfied for
an upper semi-frame, since then $\dom(C_{\Psi})=\Hil$.
\medskip

Finally, one defines the frame operator as $S=D_{\Psi}  C_{\Psi}$, so that
$$
 {Sf} =  \int_{X}  \ip{\psi_{x}}{f} \psi_{x} \, \ud \nu(x) , \;\; \forall \,f\in \dom(S),  
$$
where
$$
\dom(S)  := \{f\in \Hil  : \int_{X} \ip{\psi_{x}}{f}\, \psi_{x}   \, \ud \nu(x)  \mbox{ converges weakly in $\Hil$ }\}.
$$
Notice that one has in general $\dom(S) \subsetneqq \dom(C_{\Psi})$. As in the discrete case  \cite[Lemma 3.1]{BSA}, one has
$\dom(S) = \dom(C_{\Psi})$ if and only if $\ran(C_{\Psi})\subseteq \dom(D_{\Psi}) $. This happens, in particular, for an upper semi-frame $\Psi$, for which one has
$\dom(S) = \dom(C_{\Psi}) = \Hil$.
\medskip

For an upper  semi-frame,   $S:\Hil\to\Hil$ is a bounded injective operator and $S^{-1}$ is unbounded. If $\Phi=\{\phi_{x}\} $ satisfies the lower frame condition,  
 with lower frame bound  {\sf m}, then $S: \dom(S)\to\Hil$ is an injective operator, possibly unbounded, with a bounded inverse $S^{-1}$. Indeed,  
 if $Sf=0$ for some $f\in \dom(S)$, then $\ip{f}{Sf}  = \int_{X}  |\ip{\phi_{x}}{f}| ^2 \, \ud \nu(x)=0$, which implies that $f=0$, because $\Phi$ is total.
Furthermore, $S$ is bounded from below, since, for $f\in \dom(S)$, one has ${\sf m}\|f\|^2\leq \ip {f}{Sf}\leq \|Sf\|\cdot\|f\|$, which implies that
 $\|S^{-1}g\|\leq  {\sf m} ^{-1}\|g\|$, $\forall \,g\in \dom(S^{-1})$, i.e., $S^{-1}$ is bounded. 
\medskip

Actually,  the lower frame condition by itself is not sufficient to eliminate a number of pathologies. For instance, $S$ and $C_{\Psi}$ could have nondense domains, even reduced to
$\{0\}$, in which case one cannot define a unique adjoint $C_{\Psi}^*$ and $S$ may not be self-adjoint. One way to avoid these bad situations is the following (in the discrete case, 
a similar statement is given  in \cite[Prop 4.5]{ole2}  and \cite[Lemma 3.1(iv)]{BSA}).
\belem\label{lem:densedom}
Let $\psi_{y}\in \dom(C_{\Psi}),\, \forall\, y\in X$. Then $C_{\Psi}$ is densely defined.
\enlem
\begin{proof}
 First observe that $\overline{{\rm span}(\Psi)}^{\bot}\subseteq \dom(C_{\Psi})$. Assume that  $\psi_{y}\in \dom(C_{\Psi}),\, \forall\, y\in X$. 
  Then ${\rm span}(\Psi)\subseteq \dom(C_{\Psi})$. Therefore, $\Hil=\overline{{\rm span}(\Psi)} \oplus \overline{{\rm span}(\Psi)}^{\bot} \subseteq \overline{\dom(C_{\Psi})}$,
 which implies that $\dom(C_{\Psi})$ is dense in $\Hil$.
 \end{proof}
\medskip

Under the condition of Lemma \ref{lem:densedom}, there is a unique adjoint $C_{\Psi}^*$ and one has $D_{\Psi}\subseteq C_{\Psi}^*$, so that $D_{\Psi}$
is closable. One has indeed, as in the proof of Lemma \ref{lem:locinteg},  
$$
\ip{D_{\Psi}F}{f}_{\Hil}  = \int_{X}\ov{F(x)}\,\ip{\psi_{x}}{f}\, \ud \nu(x) = \ip{F}{C_{\Psi}f}_{L^2}, \quad
 \mbox{for any $F\in\dom(D_{\Psi})$ and $f\in \dom(C_{\Psi})$}.
$$
Thus we may state:
\belem\label{lem:densedom2}
(i) If $C_{\Psi}$ is densely defined, then $D_{\Psi}\subseteq C_{\Psi}^*$ and $D_{\Psi}$ is closable.  

(ii) $D_{\Psi}$ is closed if and only if $D_{\Psi}= C_{\Psi}^*$. In that case, $S=C_{\Psi}^*C_{\Psi} $ is self-adjoint.
\enlem
\begin{proof}
The `if' part  is obvious. If $D_{\Psi}$ is closed, then  $D_{\Psi}^*$ is densely defined and   $D_{\Psi}^{**} = D_{\Psi}$, which is  $C_{\Psi}^*$, since
$C_{\Psi} = D_{\Psi}^\ast$.
\end{proof}
\medskip

In view of the duality results of Section \ref{subsec:duality} below, we say that  a   
family $\Phi=\{\phi_{x}\}$
 is a \emph{lower semi-frame} if 
it satisfies the lower frame condition, that is, there exists   a constant ${\sf m}>0$ such that
\be
{\sf m}  \norm{}{f}^2 \le  \int_{X}  |\ip{\phi_{x}}{f}| ^2 \, \ud \nu(x) , \;\; \forall \, f \in \Hil.
\label{eq:lowersf}
\end{equation}
Clearly, \eqref{eq:lowersf} implies that the family $\Phi $ is total in $\Hil$.

On the other hand, a lower semi-frame is not a coherent state, as we have defined them in Section \ref{sec:intro}. 
Indeed,  the latter requires $S$ to be bounded, whereas here it could be unbounded, not densely defined and even have domain reduced to $\{0\}$. 
Actually, we could define such a frame operator $S$ for any family of vectors, at the risk of getting such
pathologies, but the same is true in the discrete case also, as shown in Section 4 of \cite{ole2} or \cite{BSA}.

Alternatively, one could define a lower semi-frame in a more restricted way, as a   total family $\Phi$ for which
$S$ is densely defined, bijective onto $\Hil$ and closed. Then it follows that $C_{\Phi}$ is also densely defined and that $S^{-1}$ is bounded,
 since it is closed and everywhere defined. However, this class seems to be too narrow, it is not the natural dual class of upper semi-frames.

\subsection{Duality between upper and lower semi-frames}
\label{subsec:duality}

Now we turn towards duality between upper and lower semi-frames. The first result is immediate.
\belem
Let $\Psi = \{\psi_{x}\}$ be an upper semi-frame, with upper frame bound {\sf M} and let $\Phi =\{\phi_{x}\}$ be a total family dual to $\Psi$.
Then $\Phi$ is a lower  semi-frame, with lower frame bound ${\sf M}^{-1}$. 
\label{lem:dual1}
\enlem
\begin{proof}
Duality means that
$$
f = \int_X  \ip{\phi_{x}}{f} \, \psi_{x} \, \ud\nu(x)  , \;\; \forall \,f\in \Hil.
$$
Then 
\begin{align*}
\norm{}{f}^2 &= \ip{f}{f} = \int_X  \ip{\psi_{x}}{f} \, \ip{f} {\phi_{x}}\, \ud\nu(x)
\\
&\leq \Big( \int_X |\ip{\psi_{x}}{f}| ^2  \, \ud\nu(x)  \Big)^{1/2}\, \Big( \int_X |\ip{\phi_{x}}{f}| ^2   \; \ud\nu(x) \Big)^{1/2}
\\
&\leq {\sf M}^{1/2}\norm{}{f}  \Big(\int_X  |\ip{\phi_{x}}{f}| ^2  \, \ud\nu(x)  \Big)^{1/2},
\end{align*}
so that 
$$
 {\sf M}^{-1}\norm{}{f} ^2 \leq \int_X  |\ip{\phi_{x}}{f}| ^2  \, \ud\nu(x) .
$$
\end{proof}

Note that, according to the remark above, it might happen that the lower semi-frame $\Phi$ is in fact a frame.

However, there is a  stronger result, namely (for the discrete case  see \cite[Prop.3.4]{casoleli1}), 
\begin{prop.}\label{prop:duality}  
Let $\Phi =\{\phi_{x}\}$ be any total family in $\Hil$. Then $\Phi $ is a lower semi-frame if and only if there exists an upper semi-frame
$\Psi $ dual to $\Phi $,  that is, one has, in the weak sense,
$$
f = \int_X  \ip{\phi_{x}}{f} \, \psi_{x} \, \ud\nu(x)  , \;\; \forall\, f\in \dom(C_{\Phi}).
$$
\end{prop.}
\begin{proof}
The `if' part is Lemma \ref{lem:dual1}. Let $\Phi $ be a lower semi-frame. 
Then  $C_{\Phi}^{-1}: \ran(C_{\Phi})\to \Hil$ is bounded. 
Define a linear operator $V: L^{2}(X, \ud\nu)\to \Hil$ by $V=C_{\Phi}^{-1}$ on  $\ran(C_{\Phi})$, by $V=0$ on $\ran(C_{\Phi})^{\bot}$
and extending by linearity. Then $V$ is bounded. 

Let now $\{F_{j}, j=1,\ldots,\infty\}$ be an arbitrary orthonormal basis of  $ L^{2}(X, \ud\nu)$. Writing $w(x):=C_{\Phi}f(x)= \ip{\phi_{x}}{f}$ 
for  $f\in \dom(C_{\Phi})$, we have $w= \sum_{j}  \ip{F_{j}}{w} F_{j}$. Then,
\begin{align*}
f = VC_{\Phi}f &= \sum_{j}  \ip{F_{j}}{w}V F_{j}
\\
& =\sum_{j} \Big( \int_X \ov{F_{j}(x)}w(x)\, \ud\nu(x)\Big)V F_{j}
\\
& =\int_X w(x) \Big( \sum_{j} \ov{F_{j}(x)} V F_{j}\Big)\, \ud\nu(x) 
\\
& =\int_X \ip{\phi_{x}}{f}\,\psi_{x}\, \ud\nu(x),
\end{align*}
where we have defined $\psi_{x}:=  \sum_{j} \ov{F_{j}(x)} V F_{j}$, with the sum converging weakly.

The  interchange between sum and integral may be justified by a limiting argument.
Then, using the orthonormality of the basis $\{F_{j}\}$, we get, for every $g\in \Hil$,
\begin{align*}
\int_X |{ \ip{\psi_{x}}{g}|^2\, \ud\nu(x)}
&= \int_X | \sum_{j} \ov{F_{j}(x)} \ip{V F_{j}}{g} |^2\, \ud\nu(x)\\
&= \int_X \sum_{j,k} \ov{F_{j}(x)}F_{k}(x) \ip{V F_{j}}{g}\ip{g}{V F_{k}\, \ud\nu(x)}\\
&= \sum_{j}|\ip{V F_{j}}{g} |^2 
 \leq c \norm {}{g}^2,
\end{align*}
since $(V F_{j})$ is a total Bessel sequence \cite{BSA}. Thus $\Psi=\{\psi_{x}\}$ is indeed an upper semi-frame, dual to $\Phi =\{\phi_{x}\}$. 
\end{proof}

\subsection{An example of a non-regular upper semi-frame}
\label{subsec:example}

We shall illustrate the situation by briefly describing an example of an upper semi-frame that is not regular, namely, the affine coherent states introduced by Th.~Paul \cite{paul}
(see also \cite{jpa-contframes}). These coherent states stem from the unitary irreducible representation of the connected affine group $G_{A}$ or $ax+b$ group
$$
(U_{n}(a,b)f )(r) = a^{\frac n2}e^{-ibr}f(ar), \; a>0, \, b \in\RN, \; f \in \Hil_{n},
$$
where $\Hil_{n}:=L^{2}({\RN}^{+}, r^{n+1}\ud r), n = \hbox{integer} \geq 1 $. The coherent states are indexed by points of the quotient space $G_{A}/H\simeq \RN$,
where $H$ denotes the subgroup of dilations. Since the representation $U_{n}$ is square integrable mod$(H,\sigma)$, for a suitable section $\sigma : \RN\to G_{A}$,
coherent states may be constructed by the general formalism \cite{jpa-sqintegI,jpa-CSbook}. They take the form
$$
\psi_{x}(r) = e^{-ixr}\psi (r), \quad r \in {\RN}^{+},
$$
where  $\psi$ is admissible if it satisfies  the two conditions
 \begin{align} 
  (i) \;  & \sup_{r \in {\RN}^{+}}[2\pi r^{n-1}|\psi (r)|^{2}] = 1 , \label{eq518}
\\
 (ii) \; & |\psi (r)|^{2} \neq 0, \; \hbox{except perhaps  at isolated points} \; r \in {\RN}^{+}\nn.   
\end{align}
The frame operator $S$ is   a multiplication operator on $\Hil_{n}$,
$$ 
(Sf )(r) = 2\pi r^{n-1}|\psi (r)|^{2}f(r).     
 $$
Thus 
$$
(S^{-1}f)(r) = \frac 1{2\pi}\frac {r^{1-n}}{|\psi (r)|^{2}}f (r),
$$
which  is unbounded. In addition, $f\in \dom (S^{-1})$ if and only if 
\be\label{eq522}
\frac 1{(2\pi )^{2}}\int_{{\RN}^{+}} \frac {r^{2-2n}}{|\psi (r)|^{4}}   |f(r)|^{2}r^{n+1}\ud r  < \infty . 
\end{equation}
Thus, comparing the conditions \eqref{eq518} and \eqref{eq522}, we see that none of the vectors $\psi_{x}$ is
in the domain of either $S^{-1/2}$ or $S^{-1}$. The coherent state map
$   C_{\Psi}: \Hil_{n} \to  L^{2}({\RN} , \ud x)$, given by
\be \label{eq:analexamp1} 
 (C_{\Psi}f )(x)  = \ip{ \psi_{x}}{f}     = \int_{{\RN}^{+}} e^{ixr}\ov{\psi (r)} f(r)r^{n+1}\ud r  ,     
\end{equation}
is   bounded and its range in $L^{2}({\RN} ,\ud x)$ is closed
in the new norm
\be\label{eq:normpsi}
\norm{\Psi} {F} ^{2} =  \ip {F}{C_{\Psi} \,S^{-1} \,C_{\Psi}^{-1} F }_{L^{2}({\RN}, \ud x)}
 =  \ip {F}  {G^{-1} \,  F }_{L^{2}({\RN}, \ud x)}.
\end{equation}
Finally, the reproducing kernel 
$$
 K(x,y) = ``\ip{\psi_{x}} {S^{-1} \psi_{y}}"   = \frac 1{2\pi}\int_{{\RN}^{+}}e^{i(x-y)r}\ud r,
 $$
is a distribution which defines a bounded sesquilinear form on $\h_{\Psi}$.

In this case, everything can be computed explicitly (in the sequel, we will freely exchange integrals, which can be justified by a limiting procedure). 
First, following \eqref{eq:reconstrcont1}, we evaluate the function  
$$
{C_{\Psi}^{-1} F(r) =  \frac{\widehat F(r)}{ r^{n-1} \,\overline{\psi(r)}},} \; \mbox{where}
\;\widehat F(r):=\frac{1}{2\pi}\int_{-\infty}^{\infty} e^{-ixr}F(x)\ud x
\; \mbox{is the Fourier transform of}\; F.
$$

Next, using the expressions given above for $S$ and $S^{-1}$, we obtain   for $F \in C_\Psi \left( R_S\right)$:
\begin{align*}
G^{-1}F(x)& = C_{\Psi} S^{-1}C_{\Psi}^{-1}F(x) =  \frac{1}{2\pi}\int_{0}^{\infty} e^{ixr}\frac{\widehat F(r)}{|\psi(r)|^2 \,r^{n-3}}\; \ud r, \\
G F(x)& =  C_{\Psi} SC_{\Psi}^{-1}F(x) =  2\pi\int_{0}^{\infty} e^{ixr} {\widehat F(r)}\; | \psi(r)|^2\, r^{n+1}\, \ud r .
\end{align*}
From these relations, we get  the following norms for the three Hilbert spaces of \eqref{eq:triplet},  where we understand the formulas as integrals on the subsets 
  where they converge and extend them by closure to the whole set:
\bei
\vspace{-1mm}\item[{\bf .}]
For $\h_{\Psi} :   \norm{\Psi}{F}^2 = \ip{F}{G^{-1}F}_{L^2} = 
 \dis\int_{0}^{\infty}\frac{|\widehat F(r) |^2}{\left| \psi(r) \right|^2 \, r^{n-3}}\, \ud r $ ;

\item[{\bf .}]
For $\h_{0}  :  \norm{0}{F}^2 = \norm{L^2}{F}^2$ ;

\item[{\bf .}]
For $\h_{\Psi}^\times :   \norm{\Psi^\times}{F}^2 = \ip{F}{GF}_{L^2} 
= 4 \pi^2  \dis\int_{0}^{\infty} {\left|\widehat F(r) \right|^2} \;\left| \psi(r) \right|^2 r^{n+1} \ud r   $.
\eni

As a matter of fact, this example may be trivially discretized. It suffices to choose an infinite sequence of points $\{x_{k}, k= 1, 2, \ldots\}$.
Then the map $C_{\Psi}$ becomes $C:  \Hil_{n} \to  \ell^{2}$, namely 
$$
 (C f )_{k}  = \ip{ \psi_{x_{k}}}{f}     = \int_{{\RN}^{+}} e^{ix_{k}r}\ov{\psi (r)} f(r)r^{n+1}\ud r  ,  
 $$
 it is   bounded and its range in $\ell^{2} $ is closed in the new norm 
$$ 
 \norm{\Psi} {d} ^{2} =  \ip {d}{C \,S^{-1} \,C^{-1} d }_{\ell^{2} } =  \ip {d}  {G^{-1} \, d }_{\ell^{2} }. 
 $$
 Finally, the reproducing matrix becomes 
$$
  K_{kl} = ``\ip{\psi_{k}} {S^{-1} \psi_{l}}"   = \frac 1{2\pi}\int_{{\RN}^{+}}e^{i(x_{k}-y_{l})r}\ud r, 
 $$
  a well-defined distribution.

\subsection{A dual example: a lower semi-frame}
\label{subsec:dualexample}

We also provide an example of a lower semi-frame, namely,
an example where $S^{-1}$ is bounded, but $S$ is unbounded. In other words, the lower frame condition is satisfied, but the upper one is not.

The original construction of a continuous wavelet transform on the 2-sphere $\mathbb S^{2}$ \cite{jpa-wav2sph} was based 
on a square integrable representation of the Lorentz group. Starting from a (mother) wavelet $\psi\in \Hil =L^{2}(\mathbb S^{2}, \ud \mu)$,
 assumed to be axisymmetric for simplicity, one obtains
the whole family $\{ \psi_{a,\varrho}:=  R_\varrho \, D_a \psi ,\, (\varrho, a)\in X= \mbox{SO}(3)\times {\mathbb R}_+^*\}$, where $R_\varrho $, resp.
 $D_a,\, a>0$, is the unitary rotation, resp. dilation, operator in $L^{2}(\mathbb S^{2}, \ud \mu)$.

In that context,  the resulting frame operator $S$  is diagonal in Fourier space (harmonic analysis on the 2-sphere reduces to expansions in spherical harmonics
 $Y_l^m, \, l \in \NN, m= -l, \ldots, l$), thus it is a Fourier multiplier:
$$
\widehat{S f}(l,m) =  s(l)\widehat{f}(l,m) 
$$
where   
$$
s(l): = \frac{8\pi^2}{2l+1} \sum_{|m| \leq l}\int_{0}^\infty  |\widehat{\psi}_a(l,m)|^2  \,\frac{\ud a}{a^3}, \quad
 \forall \;l \in \NN.
$$
Here $\widehat{\psi}_a(l,m) = \ip {Y_l^m }  {\psi_a} $ is the Fourier coefficient of $\psi_a = D_a \psi$.

Then, the result of the analysis is twofold. First, the wavelet $\psi\in L^2(\mathbb S^{2},\ud\mu)$ is admissible (in the sense of group theory, that is, 
admissible with respect to a square integrable representation)
 if and only if there exists a constant  $ c>0$ such that
$s(l)  \leq c, \; \forall \;l \in \NN,$
equivalently, if the frame operator  $S$  is bounded and invertible
(actually the same condition was derived directly by Holschneider \cite{hol-sphere}). Then, for any axisymmetric wavelet $\psi$, there exists
 a constant $d>0$   such that $d \leq s(l)  \leq c, \; \forall \;l \in \NN$. Equivalently, $S$ and $S^{-1}$  are both bounded,
 i.e., the family of spherical wavelets 
 $\{ \psi_{a,\varrho} ,\, (\varrho, a)\in X= \mathrm{SO}(3)\times {\mathbb R}_+^*\}$  is a continuous frame. 
  One notices, however, that the upper frame bound, which is implied by the constant $c$, does depend on $\psi$, whereas the lower frame bound, 
which derives from $d$, does not, it follows from the asymptotic behavior of the function $Y_l^m$ for large $l$. 

 For any axisymmetric admissible  wavelet $\psi$, the wavelet transform of $f\in\Hil$ reads
 $$
W_f(\varrho, a) = \ip {\psi_{a,\varrho}}{f} = \int_{\mathbb S^{2}} \ov{[R_{\varrho}D_a \psi](\omega)}\,f(\omega) \, \ud\mu(\omega)
$$
and the corresponding reconstruction formula is
$$
f(\omega) = \int_{\RN_{+}^*}\int_{\mbox{\scriptsize  SO}(3)}  
W_f(\varrho, a)\,[S^{-1}R_{\varrho}D_a\psi](\omega)\, \frac{\ud a}{a^3}\, \ud\varrho   , \; f \in L^2(\mathbb S^{2}).
$$

Now, it was shown by Wiaux \emph{et al.} \cite{wiaux} that the same reconstruction formula is valid under the weaker condition
$0 < s(l) < \infty,\;  \forall \;l \in \NN$. Since the behavior of $s(l)$ is arbitrary, this means exactly that  the frame operator
$S$ is allowed to be unbounded. The lower frame bound, being independent of $\psi$, remains untouched, so that $S^{-1}$ stays bounded,
as announced.

\section{The discrete case}
\label{sec: discreteframes}

In the continuous case of Section \ref{sec:cont-frames}, the integrals are considered in the weak sense.
However, in the discrete case, we are interested in expansions with norm convergence, thus all the expansions in the rest of the paper should be understood as norm convergent. 

\subsection{Notations}
\label{subsec: notation}

Let now $X$ be a discrete set and  $\nu$ a counting measure, so that we go back to the familiar (discrete) frame  setting.
Before proceeding, it is useful to convert the notations.

The ambient Hilbert space  $L^{2}(X, \ud\nu )$ becomes $\ell^2 $. 
The sequence  $\Psi =(\psi_n, \,n \in \Gamma)$, where $\Gamma$ is some index set, usually $\NN$, is called a Bessel sequence for the Hilbert space $(\Hil, \<\cdot,\cdot\>)$
 if there exists a  constant ${ \sf M}\in(0,\infty)$ such that
\be
  \sum _{k\in \Gamma} \left| \ip {\psi_k}{f} \right|^2  \le { \sf M}  \norm{}{f}^2 ,  \forall \, f \in \Hil.
\label{eq:discr-bessel}
\end{equation}
To a given Bessel sequence $\Psi$ for $\Hil$, the following three operators are associated:
\bei
\item The analysis operator $C: \Hil \rightarrow  \ell  ^2 $ given by  $Cf = \{\ip {\psi_k }{f }, k \in \Gamma\}$, which is the analogue
 of $C_{\Psi}: \Hil \to L^{2}(X, \ud\nu )$ ;  

\item The synthesis operator  $D : \ell  ^2   \rightarrow \Hil$  given by
 $Dc = \sum_k c_k \psi_k,$ where $c= (c_k)$;

\item  The frame operator $S:\Hil\to\Hil$ given by   
 $Sf = \sum_k \ip{\psi_k}{f}\, \psi_k$,  so that
$$
 \ip{f}{Sf}\ = \sum _{k\in \Gamma} \left| \ip {\psi_k}{f} \right|^2.
$$
\eni
Moreover,   we have $D=C^\ast, \; C=D^ \ast $, and  $S= C^ \ast C$.

 \subsection{Discrete frames}
\label{subsec:bdd-discrframe}

 For the sake of completeness and comparison with the unbounded case, it is worthwhile to quickly summarize the salient features of 
 frames, following closely Section \ref{subsec:bdd-frames}.  We do it in the form of a theorem. Of course, all the statements below are
 well-known \cite{Casaz1,ole1,Groech1}, but the approach is non-standard following the continuous approach \cite{jpa-sqintegI,jpa-contframes,jpa-CSbook}.
\begin{theorem} 
\label{theor-41}
Let $\Psi=(\psi_k)$ be a frame in $\Hil$,
with analysis operator $C : \Hil \rightarrow \ell^2 $, synthesis operator $D: \ell  ^2   \rightarrow \Hil$ and frame operator is $S: \Hil \rightarrow \Hil $.
Then
\bei
\item[(1)] $\Psi  $ is total in $\Hil$. The operator $S$ has a bounded inverse $S^{-1}:\Hil\to\Hil$ and one has the reconstruction formulas 
\begin{align} \label{srepr}
f &=  S^{-1} S f = \sum _k \ip{\psi_k}{ f}  S^{-1} \psi_k , \; \mbox{ for every }\; f\in \Hil, 
\\ \label{srepr2}
f &=   S S^{-1} f = \sum _k \ip{S^{-1} \psi_k}{ f}   \psi_k , \; \mbox{ for every }\; f\in \Hil. 
\end{align}

\vspace*{-2mm} \item [(2)] $R_{C}$ is a closed subspace of $\ell^2$. The projection $P_\Psi$ from $\ell^2$ onto  $R_{C}$ is given by $P_\Psi = C S^{-1} D$
 $=C C^+$, where, as before, $C^+$ is the pseudo-inverse of $C$.   

\item [(3)] Define
\be
\ip {c}{d }_{\Psi} = \ip {c}{C S^{-1} C^{-1} d} _{\ell  ^2 }, \,  \, c, d\in R_C. 
\label{eq:discr-scalarprod}
\end{equation}
The relation (\ref{eq:discr-scalarprod}) defines an inner product on $R_C$ and $R_C$ is complete in this inner product. 
Thus, $(R_{C}, \<\cdot,\cdot\>_\Psi)$ is a Hilbert space, which will be denoted by $\h_\Psi $.

\item [(4)] $\h_\Psi $ is a reproducing kernel Hilbert space with kernel given by the matrix  ${\mathcal P}_{k,l} = \ip  { \psi_k}   {S^{-1}\psi_l }$. 

\item [(5)]  The analysis operator $C$ is a unitary operator from $\Hil$ onto  $\h_\Psi $.
Thus, it can be inverted on its range by its adjoint, which leads to the reconstruction formula (\ref{srepr}).  
\eni
\end{theorem}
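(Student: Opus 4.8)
The plan is to mirror the continuous development of Section \ref{subsec:bdd-frames}, but keeping track of the fact that in the discrete, everywhere-bounded situation all convergences are norm convergences. Since $\Psi$ is a frame, the two-sided inequality \eqref{eq:discr-bddframe} holds, so $S=C^\ast C$ is bounded, self-adjoint, positive and invertible with $S^{-1}\in GL(\Hil)$; the lower bound gives $\|f\|^2\le {\sf m}^{-1}\ip{f}{Sf}$, hence $\|S^{-1}\|\le {\sf m}^{-1}$. Totality of $\Psi$ is immediate: if $\ip{\psi_k}{f}=0$ for all $k$ then $\ip{f}{Sf}=\sum_k|\ip{\psi_k}{f}|^2=0$, so by the lower bound $f=0$. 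For (1), write $f=S^{-1}Sf$ and expand $Sf=\sum_k\ip{\psi_k}{f}\psi_k$ (norm convergent, since $\Psi$ is Bessel and $Cf\in\ell^2$), then move the bounded operator $S^{-1}$ inside the sum to get \eqref{srepr}; \eqref{srepr2} follows symmetrically from $f=SS^{-1}f$ applied to $S^{-1}f\in\Hil$, using $\ip{\psi_k}{S^{-1}f}=\ip{S^{-1}\psi_k}{f}$ by self-adjointness of $S^{-1}$.

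For (2), closedness of $R_C$ is exactly Lemma \ref{lem:analop} (discrete version): $C$ is closed, injective by the lower bound, and $C^{-1}$ is bounded on $R_C$ with $\|C^{-1}d\|^2=\|C^{-1}d\|^2$ — more precisely $\|Cf\|_{\ell^2}^2=\ip{f}{Sf}\ge {\sf m}\|f\|^2$, so $C^{-1}$ is continuous, hence $R_C$ is complete, i.e.\ closed in $\ell^2$. Then I would verify that $P_\Psi=CS^{-1}D=CS^{-1}C^\ast=CC^+$ is self-adjoint (clear, since $S^{-1}$ is self-adjoint and $(CS^{-1}C^\ast)^\ast=CS^{-1}C^\ast$) and idempotent: $(CS^{-1}C^\ast)(CS^{-1}C^\ast)=CS^{-1}(C^\ast C)S^{-1}C^\ast=CS^{-1}SS^{-1}C^\ast=CS^{-1}C^\ast$. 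Its range is contained in $R_C$ and contains $R_C$ (since $C^+C=I$ gives $P_\Psi Cf=Cf$), so $P_\Psi$ is the orthogonal projection of $\ell^2$ onto $R_C$.

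For (3), I would check that \eqref{eq:discr-scalarprod} is well defined on $R_C$: for $c=Cf$, $d=Cf'$ one has $\ip{c}{d}_\Psi=\ip{Cf}{CS^{-1}f'}_{\ell^2}=\ip{f}{C^\ast C S^{-1}f'}_\Hil=\ip{f}{f'}_\Hil$, exactly as in the computation \eqref{eq:unitary}. This shows the form is a genuine inner product (Hermitian, positive definite because $C$ is injective) and that $C:\Hil\to(R_C,\ip{\cdot}{\cdot}_\Psi)$ is isometric; it is onto $R_C$ by definition, hence unitary, and since $\Hil$ is complete so is $R_C$ in this norm — that is (3) and the first assertion of (5). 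For (4), I would note that, because $P_\Psi$ is the orthogonal projection onto $\h_\Psi$ within $\ell^2$ and $\h_\Psi$ consists of genuine sequences, evaluation at index $k$ is the inner product in $\ell^2$ against the $k$-th unit vector $e_k$, and $P_\Psi e_k$ is the reproducing element; computing $(P_\Psi d)_k=(CS^{-1}C^\ast d)_k=\ip{\psi_k}{S^{-1}\sum_l d_l\psi_l}=\sum_l d_l\,\ip{\psi_k}{S^{-1}\psi_l}$ identifies the kernel matrix as ${\mathcal P}_{k,l}=\ip{\psi_k}{S^{-1}\psi_l}$; one then checks this is the reproducing kernel with respect to $\ip{\cdot}{\cdot}_\Psi$ by transporting through the unitary $C$. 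Finally (5): $C$ unitary from $\Hil$ onto $\h_\Psi$ means $C^{-1}=C^{\ast(\Psi)}$, the adjoint with respect to $\ip{\cdot}{\cdot}_\Psi$, which equals the pseudo-inverse $C^+=S^{-1}C^\ast=S^{-1}D$; applying $C^{-1}=S^{-1}D$ to $Cf$ reproduces \eqref{srepr}. The only mildly delicate point is the identification of $C^{\ast(\Psi)}$ with $C^+$ and of the reproducing kernel in part (4); everything else is routine once the isometry identity $\ip{Cf}{Cf'}_\Psi=\ip{f}{f'}$ is in hand, so I would expect that bookkeeping — tracking which inner product each adjoint is taken with respect to — to be the main thing to get right.
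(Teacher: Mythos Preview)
Your proposal is correct and follows essentially the same route as the paper: the key step in both is the isometry identity $\ip{Cf}{Cf'}_\Psi=\ip{f}{f'}_\Hil$, from which unitarity of $C$, completeness of $(R_C,\ip{\cdot}{\cdot}_\Psi)$, the identification $C^{\ast(\Psi)}=S^{-1}D=C^+$, and the projection/reproducing-kernel formulas all follow. The only organizational difference is that you deduce closedness of $R_C$ in $\ell^2$ directly from the lower frame bound (bounded-below $\Rightarrow$ closed range), whereas the paper obtains it from the equivalence of $\norm{\ell^2}{\cdot}$ and $\norm{\Psi}{\cdot}$ on $R_C$ (using boundedness of $S^{-1}$); both are standard and immediate.
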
 
\begin{proof} 
(1) follows from the lower frame condition.   

 (2) \& (3)  $C$ is defined on all of $\Hil $, $S = D C$, and so $C$ is injective and  $D$ is surjective. Therefore, for all  $f,f'\in \Hil $,
$$
 \ip {C f}{Cf'}_{\Psi}=  \ip {C f} {C S^{-1} C^{-1} C f}_{\ell^2 } =\ip {C f} {C S^{-1} f}_{\ell^2 }
= \ip {  f} {DC S^{-1} f}_{\Hil} = \ip {f}{f'}_{\Hil}.
$$
Thus $C$  is isometric from $\Hil$ onto  $R_C$, equipped with the new inner product \eqref{eq:discr-scalarprod}. Thus the latter is complete, 
hence  a Hilbert space. 

  Since $S^{-1}$ is bounded, the norms $\norm{\ell^2 }{\cdot}$ and $\norm{\Psi}{\cdot}$ are equivalent. Therefore, $R_{C}$ is a closed subspace of $\ell^2$.
Since $C$  is an isometry onto $\h_\Psi $, the corresponding projection onto $R_C = \h_\Psi $ is $C{C^*}^{(\Psi)}$, where ${C^*}^{(\Psi)}:\h_\Psi\to \Hil$ 
denotes the adjoint  of $C: \Hil \to \h_\Psi$, and in fact coincides with the pseudo-inverse $C^+$ of $C$. Then we have,
 for every $c\in\h_\Psi $ and every $ f\in \Hil$,
$$
 \ip{ {C^*}^{(\Psi)} c }  {f}_{\Hil} = \ip {c} { C f } _ \Psi 
=  \ip {c}  { C S^{-1} f} _{\ell^2 } = \ip {\left( C S^{-1} \right)^{*} c}  {f} _{\Hil} = \ip { S^{-1} C^* c}  { f} _{\Hil} = \ip { S^{-1} D c}  { f} _{\Hil} .
$$
So $C^{*(\Psi)} = S^{-1} D_{\up_{\h_\Psi}} =C^+$ and $P_\Psi = C S^{-1} D : \ell^2 \to R_C$. One verifies immediately that $P_\Psi^2 =P_\Psi$ and $P_\Psi^\ast = P_\Psi$.

(4) Given $c\in \h_\Psi$, we have $ S^{-1}Dc = \sum_k (S^{-1}\psi_k) c_k$ and thus 
$$
(P_\Psi c)_l = (CS^{-1}Dc)_l  =  \sum_k (CS^{-1}\psi_k)_l c_k = \sum_k \ip{\psi_l }{S^{-1}\psi_k}c_k = \sum_k{\mathcal P}_{l,k}c_k.
$$

(5) Obvious. 
\end{proof}
\medskip

Since   the norms $\norm{\ell^2 }{\cdot}$ and $\norm{\Psi}{\cdot}$ are equivalent, 
 here  the three Hilbert spaces of    the Gel'fand triplet \eqref{eq:triplet} coincide as sets, with equivalent norms.

\subsection{Discrete upper semi-frames}
\label{subsec:unbdd-discrframe}

Let now $\Psi$ be an upper semi-frame, that is, a sequence $(\psi_k)$ satisfying the relation
\be
0 < \sum_{k\in \Gamma} \left| \ip {\psi_k}{f} \right|^2  \le {\rm\sf M}  \norm{}{f}^2 ,  \forall \, f \in \Hil, \, f\neq 0.
\label{eq:discr-unbddframe}
\end{equation}
First we have the following easy result, that follows immediately from \eqref{eq:discr-unbddframe}.
\begin{lem.}
$\Psi = (\psi_k)$ is an upper semi-frame if and only if it is a  total Bessel sequence.
\end{lem.}
\medskip

As we recalled in Section \ref{subsec:bdd-discrframe}, a useful property of a frame $\Phi$ for $\Hil$ is that every element in $\Hil$
 can be represented as series expansions in the form
\begin{equation} \label{ff}
f=\sum \ip {\phi_k}{f}\, \psi_k = \sum \ip {\psi_k }{f}\,  \phi_k
\end{equation}
via some sequence $\Psi$. However, there exist Bessel sequences $\Phi$ for $\Hil$, which are not frames and for which \eqref{ff} holds 
via a sequence $\Psi$ (which cannot be Bessel for $\Hil$); for example, consider $\Phi=(\frac{1}{k}e_k)$ and $\Psi=(ke_k)$ (see Section \ref{subsec:simplexamples}). 
Thus, the frame property is sufficient, but not necessary for series expansions in the form \eqref{ff}.
As a matter of fact, if one requires a series expansion via a Bessel sequence  which is not a frame, then the dual sequence cannot be Bessel \cite{jpa-Besseq}.

Before going into these duality considerations, we analyze the various operators, as in the general case.

\begin{lem.}
 Let   $\Psi$ be an upper semi-frame. Then one has:
\bei
\item[(1)] The analysis operator $C$ is an injective bounded operator and the synthesis operator $D$ is a bounded operator with dense range.
The frame operator $S$ is a bounded, self-adjoint, positive operator with dense range. 
Its inverse $S^{-1}$ is  densely defined and self-adjoint.

\item[(2)] $R_C^{\Psi} \subseteq R_C \subseteq \ov{R_C}$, with dense inclusions, where $R_C^{\Psi}:= C(R_{S})$ and
$\ov{R_C}$ denotes the closure of $R_C$ in $ \ell  ^2  $.
\eni
\end{lem.}
\begin{proof}
(1)  Since $(\psi_k)$ is   a total    Bessel sequence,   the operators $C,D,S$ are bounded. $C$ is clearly injective, so $D$ has dense range
and $S$ reads, with unconditional convergence,
$$
Sf = \sum_k \ip{\psi_k}{f}\, \psi_k, \; \mbox{ for all } \; f\in \Hil.
$$
{As $\ov{R_C} = {\sf Ker}(D)^\bot$, $S$ in injective.}
Since $S = D C = C^* C = D D^* ,  \,S$ is self-adjoint and positive. $R_S$ is dense and $S^{-1}$ is self-adjoint and
positive, with dense domain $\dom(S^{-1})= R_S \subset \Hil$. 

(2) This is immediate.
\end{proof}
\medskip

In accordance with the continuous case, we say that the upper semi-frame $\Psi = (\psi_k)$ is \emph{regular} if
 every $\psi_{k}$ belongs to $\dom(S^{-1})=R_S$. First note that, if $\Psi$ is an upper semi-frame for $\Hil$, then 
 \begin{equation} \label{ssminus1}
 f=SS^{-1}f=\sum_{k} \ip{\psi_k}{S^{-1}f}\psi_k, \ \forall f\in R_S.
 \end{equation}
If we want to write the expansion above using a dual sequence (similar to the frame expansion \ref{srepr2}), then the upper semi-frame should be regular.

\begin{prop.} \label{ssminus} 
Let $\Psi$ be a regular upper semi-frame for $\Hil$. Then
\begin{equation}\label{eq:trivreconstunb1}
f=SS^{-1}f=\sum_{k} \ip{S^{-1}\psi_k}{f}\psi_k, \ \forall f\in R_S.
\end{equation}
\end{prop.} 
\begin{proof} As in the continuous case, this follows from the facts that $S$ is bounded and $S^{-1}$ is self-adjoint.
\end{proof}
\medskip

\berem
It does not seem possible to extend this reconstruction formula to all $f\in\Hil$.
Indeed, let $ R_S \ni f_{n}\to f \in \Hil $. Then we have d 
 $\ip{S^{-1}\psi_k}{f_{n}}_{\Hil} \to \ip{S^{-1}\psi_k}{f}_{\Hil}$ pointwise. 
If we knew in addition that  $\ip{S^{-1}\psi_k}{f}_{\Hil}$ does belong to
$\ell^2$, then  the r.h.s. of \eqref{eq:trivreconstunb1} would be  an inner product in $\ell ^2$,
 hence continuous in both terms, so that we could conclude that \eqref{eq:trivreconstunb1} is valid for every $f \in \Hil$.  

However, we can show that
$\ip{S^{-1}\psi_k}{f}_{\Hil} \in \ell^2$ for all $f$ if and only if $S^{-1}$ is bounded.
Indeed, if $S^{-1}$ is bounded, then $(S^{-1}\psi_k)$ is Bessel, so this direction is clear.
On the other hand, let $\psi_k$ be a total Bessel sequence. 
As $\ip{S^{-1}\psi_k}{f}_{\Hil} \in \ell^2$ we have that $\phi_k = S^{-1}\psi_k$ is a Bessel sequence.
 Let indeed $Cf = (\ip{\phi_k}{f})\in \ell^2, \forall f $,  i.e. $\dom(C)$ is the whole Hilbert space. By \cite[Prop. 4.1 (a1)]{BSA}  $ (\phi_k) $ is Bessel.  
Then, by \cite[Prop. 4.6, (a) and (g)]{BSA}, $\psi_k = V e_k$ with $V$ bounded and having dense range. 
As $\phi_k = S^{-1} Ve_k$ is a Bessel sequence by the same result, $S^{-1} V$ must be bounded. Therefore $S^{-1}$ is bounded on $R_V$. But as $R_V$ is dense, $S^{-1}$ 
can be extended to a bounded operator everywhere.

Note this implies that, if the reconstruction formula can be extended to the whole Hilbert space by the strategy just described, then the original sequence was already a frame. 
\enrem
\medskip

The whole motivation of the present construction is to translate abstract statements in $\Hil$ into concrete ones about sequences,
 taking place in $\ell^2$. The correspondence is implemented by the operators $C$ and $C^{-1}$. Hence we first transport
the operators $S$ and $S^{-1}$, according to the following proposition.

\begin{prop.} \label{prop44}
\bei\item[(1)] Define the operator $G_{S} : R_C \to R_C^ \Psi $   by  $G_{S} = C   S Ê  C^{-1}$  
and the operator $G_{S} ^{-1} : R_C^ \Psi \to R_C$   by  $G_{S} ^{-1} = C   S^{-1} Ê  C^{-1}{|_{R_C^\Psi}}.$ Then, in the Hilbert space $ \ov{R_C}$,
$G_{S} $ is a bounded, positive and symmetric operator, while $G_{S} ^{-1}$ is positive and essentially self-adjoint.
These two operators are bijective and inverse to each other.

\item[(2)]  Let $G = \ov{G_{S}}$ and let $G^{-1}$ be the self-adjoint extension of  $G_{S} ^{-1}$.
Then $G: \ov{R_C} \to R_{G} \subseteq \ov{R_C}$ is 
bounded, self-adjoint and positive  with  $\dom(G) = \ov{R_C}$, thus $G = C D{|_{ \ov{R_C}}}$.
Furthermore $G^{-1}: \dom(G^{-1}) \subset \ov{R_C} \to \ov{R_C}$ is self-adjoint and positive, with domain 
$ \dom(G^{-1}) = R_{G}=   C (R_{D})  $, a dense subspace of $ \ov{R_C}$. The two operators are inverse of each other, in the sense that
$$
 G^{-1}G = I{|_{\ov{R_C}}}, \quad G G^{-1}=  I{|_{C(R_D)}}.
$$
\eni
\end{prop.}
\begin{proof}
(1) $G_{S} $ and $G_{S} ^{-1}$ are well-defined on their respective domains and obviously    inverse to each other.
Let $c \in R_C$ and $d \in R_C^ \Psi $ with $c=C f$ and $d = C g$ for $f \in \Hil$ and $g \in R_S$.
Then we have
$$
\ip {c}   {G_{S} ^{-1} d} _{\ell^2 } =
 \ip { c} {C S^{-1} C^{-1} d}_{\ell^2 } = \ip {C f}{C S^{-1} g}_{\ell^2 } =\ip { f}{D C S^{-1} g}_{\Hil} 
 = \ip { f }{g }_{\Hil} = \ip { C^{-1} c}{C^{-1} d}_{\Hil}.
$$
Therefore $G_S$ is positive.
Clearly   $S C^{-1}|_{R_C} = D|_{R_C}$ and so $G_S$ is bounded.
$G_{S} ^{-1} = {C^{-1}}^* C^{-1}|_{R_C^ \Psi}$, so  it is symmetric, and therefore closable.
  Furthermore ${G_{S} ^{-1}}^* = {C^{-1}}^* C^{-1}|_{R_C}$.
It remains to show that the defect indices of $G_{S} ^{-1}$ are (0,0), which results from a direct calculation. Indeed, let $z\in\CN$ with Im\,$z \neq 0$ and
suppose there  exists a element $c\in R_{C}$ such that 
\be\label{eq:defect}
\ip{c}{(G_{S} ^{-1} - z)d}_{\ell^2}=0, \mbox{ for all }  d\in R_C^ \Psi.
\end{equation}
Then, if $c= C f, \, d= Cg$, with $f,g\in\Hil$, it follows that \eqref{eq:defect} implies
$$
\ip{f}{g} = z \ip{f}{Sg}, \; \forall \, g\in\Hil.
$$
By the positivity of $S$, we must have $f=0$ and therefore $c=0$. Since $R_{C}$ is dense in $\ov{R_{C}}$, this implies that $G_{S} ^{-1}$  has  defect indices (0,0) 
 and thus is 
essentially self-adjoint.

(2) follows immediately from (1). The fact that $G$ and $G^{-1}$ are inverse of each other follows from the corresponding relation of 
 $G_{S} $ and $G_{S} ^{-1}$ and the definition of the domain of the closure of an operator, namely
 $ \dom(G^{-1}) = \{c \in \ov{R_{C}}: c=\lim_{i} c_{i}, \,c_{i}\in R_{C},
\,\mathrm{and}\, (G_{S}^{-1}c_{i}) \,\mathrm{converges}\}$. 

First, for every $c\in \ov{R_{C}}, \, Gc\in \dom(G^{-1})$. Take indeed
$\ov{R_{C}}\ni c = \lim_{i}c_{i}, \,c_{i}\in R_{C}$. Then $Gc = \lim_{i}G_{S} c_{i}$ and $G^{-1}Gc_{i} = G^{-1}G_{S} c_{i}  
= G^{-1}_{S}G_{S} c_{i} = c_{i} \to c$.
Next $G^{-1}Gc =G^{-1}G \lim_{i}c_{i}= G^{-1}\lim_{i}G_{S}c_{i}=  \lim_{i}G_{S}^{-1}G_{S}c_{i} = c$.

On the other hand, for every $c\in \dom(G^{-1})$, one has $GG^{-1}c = GG^{-1}\lim_{i}c_{i} = G\lim_{i}G_{S}^{-1} c_{i} 
= \lim_{i}G_{S} G_{S}^{-1} c_{i} = c$.
This proves that indeed $G$ and $G^{-1}$ are inverse to each other on the appropriate domains.

Finally,  $G^{-1}$ is  positive, since its inverse $G$ is positive and bounded, and thus the spectrum of $G^{-1}$ is bounded away from 0.
 \end{proof}
\bigskip

 {\bf Remark}:  the proof of Proposition \ref{prop44} is  partly  modelled on the similar one in the original paper \cite{jpa-sqintegI}.
\medskip

Putting everything together, we have  the following commutative diagram: 
\be\label{diagram3}
\hspace*{-1cm}\begin{minipage}{15cm}
\begin{center}
\setlength{\unitlength}{2pt}
\begin{picture}(100,60)
\put(10,50){$\Hil$}
\put(89,50){$R_C \subseteq  \ov{R_C}\subseteq \ell^2 $}
\put(20,52){\vector(4,0){65}}
\put(54,44){$C^{-1}$}
\put(85,50){\vector(-4,0){65}}
\put(55,54){$C$}
\put(10,47){\vector(0,-4){30}}
Ê\put(12,17){\vector(0,4){30}}
\put(91,47){\vector(0,-4){30}}
Ê\put(93,17){\vector(0,4){30}}
\put(-35,10){$ \Hil \supseteq \dom(S^{-1})=R_S$}
\put(85,47){\vector(-2,-1){65}}
\put(14,30){$S^{-1}$}
\put(5,30){$S$}
\put(82,30){$G_{S}$}
\put(95,30){$G_{S} ^{-1}$}
\put(89,10){$R_C^ \Psi \subseteq \ell^2$}
\put(20,12){\vector(4,0){65}}
\put(50,4){$C^{-1}$}
\put(48,33){$D$}
Ê\put(85,10){\vector(-4,0){65}}
\put(50,15){$C$}
\end{picture}
\end{center}
\end{minipage}
\end{equation}

As before, define on $R_{C}^{\Psi}$ the new inner product $\ip {c}{d}_ \Psi = \ip{c}{G^{-1} d}_{\ell^2 }$, which makes sense
since $G^{-1}$ is self-adjoint and positive. Therefore $\ip {c}{d}_ \Psi = \ip{G^{-1/2}c}{G^{-1/2} d}_{\ell^2 }$.
Denote by
${\h}_{\Psi}:={\ov{R_{C}^{\Psi}}}^{\Psi} $ the closure of $R_{C}^{\Psi}$ in the corresponding new norm, which is a Hilbert space.

Then the fundamental result reads as follows.
\begin{theorem}
\label{theor-43} Let ${\h}_{\Psi} := {\ov{R_{C}^{\Psi}}}^{\Psi}$ be defined as above. 
Then: 
\bei
\item[(1)] ${\h}_{\Psi}$ coincides with $R_C$ (as sets) and $C$ is a unitary map  (isomorphism) between $\Hil$ and ${\h}_{\Psi}$.

\item[(2)] 
The norm $\norm{\Psi}{.}$ is equivalent to the graph norm of $G^{-1/2}$ and, therefore, $\dom( G^{-1/2} ) = {\h}_{\Psi}$. 

\item[(3)]
  $C^{*(\Psi)} = \left( S^{-1} D \right)\!{|_{{\h}_{\Psi}}}$, where $C^{*(\Psi)}:{\h}_{\Psi} \to \Hil$ is the adjoint of
$C: \Hil  \to{\h}_{\Psi} $. Moreover,  for every $f\in\Hil$, one has
$$
 f = C^{*(\Psi)} C f = \left( S^{-1} D \right) C f. 
$$
\item[(4)] 
For all $f \in R_D$, we have the reconstruction formula
 \begin{equation}\label{sec:reconssynth2}
  f = \sum \limits_k \left[ G^{-1}  \left( \left< f , \psi_k \right>_\Hil \right) \right] \psi_k 
  \vspace*{-5mm}\end{equation}
with unconditional convergence.
\eni
\end{theorem}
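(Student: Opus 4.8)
The plan is to establish the four assertions in the order listed, leaning on Proposition \ref{prop44} for the operator-theoretic backbone and on the abstract unitary-map argument already used in the continuous setting (equation \eqref{eq:unitary}) and in Theorem \ref{theor-41}. For (1), I would first repeat the isometry computation: for $f,f'\in R_S$, writing $c=Cf$, $d=Cf'$, one has $\ip{Cf}{Cf'}_\Psi = \ip{Cf}{G^{-1}Cf'}_{\ell^2} = \ip{Cf}{CS^{-1}C^{-1}Cf'}_{\ell^2} = \ip{Cf}{CS^{-1}f'}_{\ell^2} = \ip{f}{DCS^{-1}f'}_\Hil = \ip{f}{SS^{-1}f'}_\Hil = \ip{f}{f'}_\Hil$, using that $DC=S$ and $SS^{-1}=I$ on $R_S$. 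Hence $C$ is isometric from the dense subspace $R_S$ of $\Hil$ onto the dense subspace $R_C^\Psi$ of $\h_\Psi$, so it extends to a unitary map $\Hil\to\h_\Psi$. To see that $\h_\Psi$ coincides with $R_C$ as a set, I would invoke part (2): since $G^{-1}$ is self-adjoint, positive and (by the last line of the proof of Proposition \ref{prop44}) has spectrum bounded away from $0$, the norm $\norm{\Psi}{c} = \norm{\ell^2}{G^{-1/2}c}$ dominates a multiple of $\norm{\ell^2}{c}$ on $\dom(G^{-1/2})$; conversely $G^{1/2}$ is bounded, so completing $R_C^\Psi$ in $\norm{\Psi}{\cdot}$ produces exactly $\ran(G^{1/2}) = \dom(G^{-1/2})$, and one checks this equals $R_C$ because $G = CD{|_{\ov{R_C}}}$ identifies $\ran(G^{1/2})$ with $C(\ran(S^{1/2})) = C(\Hil) = R_C$. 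This simultaneously gives (2).

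For (3), I would compute the adjoint directly. For $c\in\h_\Psi$ and $f\in\Hil$, $\ip{C^{*(\Psi)}c}{f}_\Hil = \ip{c}{Cf}_\Psi = \ip{c}{G^{-1}Cf}_{\ell^2}$. Now I want to move $G^{-1}$ off; since $G^{-1}$ is self-adjoint and $CD{|_{\ov{R_C}}} = G$, one has $\ip{c}{G^{-1}Cf}_{\ell^2} = \ip{G^{-1}c}{Cf}_{\ell^2}$ when $c\in\dom(G^{-1})$, but more robustly I would write $G^{-1}C = CS^{-1}C^{-1}C = CS^{-1}$ on the appropriate domain, giving $\ip{c}{CS^{-1}f}_{\ell^2} = \ip{D c}{S^{-1}f}_\Hil = \ip{S^{-1}Dc}{f}_\Hil$, where the last step uses self-adjointness of $S^{-1}$ together with $Dc\in R_S = \dom(S^{-1})$. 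This requires knowing $Dc\in R_S$ for $c\in\h_\Psi$; I would argue this from $D = C^{*(\Psi)}$-type identities, or more directly from $D{|_{\ov{R_C}}} = C^{-1}G$ (rearranging $G = CD{|_{\ov{R_C}}}$) so that $D(\h_\Psi) = D(R_C) = C^{-1}G(R_C) = C^{-1}(R_G) = C^{-1}C(R_D)\subseteq R_S$ since $R_D\subseteq R_S$ — actually $R_D = \ran(D)\subseteq \Hil$, and one uses $S^{-1}D = C^{-1}$ on $R_C$, which is the cleanest route. Concretely: $S^{-1}D{|_{R_C}} = S^{-1}C^{-1}G = S^{-1}C^{-1}CD{|_{\ov{R_C}}}$... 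I would streamline this by just proving the composite identity $C^{*(\Psi)} = S^{-1}D{|_{\h_\Psi}}$ via the defining property $\ip{C^{*(\Psi)}c}{f} = \ip{c}{Cf}_\Psi$ and the chain of identities above, then the reconstruction formula $f = C^{*(\Psi)}Cf = S^{-1}DCf = S^{-1}Sf = f$ is automatic from unitarity of $C$. The main subtlety here — and the step I expect to need the most care — is justifying that $Cf\in\dom$ of whatever operators appear so that all the rearrangements with the unbounded $S^{-1}$ and $G^{-1}$ are legitimate; this is exactly where regularity of $\Psi$ is NOT needed but the domain bookkeeping of Proposition \ref{prop44}(2) is essential.

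For (4), I would specialize (3) to $f\in R_D = \ran(D)$. Then $S^{-1}f$ makes sense... no: the point of restricting to $R_D$ is that $G^{-1}(Cf)$ is defined, since $\dom(G^{-1}) = C(R_D)$ by Proposition \ref{prop44}(2), so $Cf\in\dom(G^{-1})$ precisely when $f\in R_D$. Then from $f = C^{*(\Psi)}Cf = S^{-1}DCf$ and the identity $G^{-1}C = CS^{-1}$ on the relevant domain, I get $f = S^{-1}D Cf = C^{-1}G^{-1}Cf = C^{-1}\big(G^{-1}(\ip{\psi_k}{f})_k\big)$; applying the unbounded synthesis-type operator, equivalently writing $D$ applied to the $\ell^2$-sequence $G^{-1}((\ip{f}{\psi_k})_k)$, yields $f = \sum_k [G^{-1}((\ip{f}{\psi_k})_k)]_k\,\psi_k$. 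Since the sequence $G^{-1}(Cf)\in\ell^2$ (as $G^{-1}$ maps into $\ov{R_C}\subseteq\ell^2$) and $(\psi_k)$ is a Bessel sequence, $D$ is bounded on $\ell^2$ and the series converges unconditionally — which is the final claim. I would note that the genuinely new content over the continuous case is only this unconditional (norm) convergence, which is immediate from Besselness of $\Psi$ and does not require regularity.
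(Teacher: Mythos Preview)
Your overall plan mirrors the paper's, but two of your detours introduce genuine gaps that the paper avoids with much simpler arguments.

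For part~(1), your route to $\h_\Psi = R_C$ goes via $\ran(G^{1/2}) = C(\ran(S^{1/2})) = C(\Hil) = R_C$. The middle equality is false: $S^{1/2}$ is bounded with unbounded inverse, so $\ran(S^{1/2}) = \dom(S^{-1/2})$ is dense but \emph{proper} in $\Hil$. (Also, the relation $S^{1/2} = D G^{-1/2} C$ that would let you transport $G^{1/2}$ to $S^{1/2}$ only appears \emph{after} Theorem~\ref{theor-43} in the paper, so invoking it here is circular.) The paper's argument bypasses all this: since the unitary extension of $C$ maps $\Hil$ onto $\h_\Psi$, and the original bounded $C$ maps $\Hil$ onto $R_C$, the two targets coincide as sets.

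For part~(3), your direct computation of the adjoint writes $\ip{c}{Cf}_\Psi = \ip{c}{G^{-1}Cf}_{\ell^2}$, which requires $Cf \in \dom(G^{-1}) = C(R_D)$, i.e.\ $f \in R_D$, not all $f \in \Hil$; you then chase domain conditions you yourself flag as delicate. The paper dispenses with all of this: unitarity gives $C^{*(\Psi)}C = I$, and separately $S^{-1}DC = S^{-1}S = I$ on $\Hil$ (since $Sf \in R_S = \dom(S^{-1})$ always). Surjectivity of $C:\Hil\to\h_\Psi$ then forces $C^{*(\Psi)} = S^{-1}D$ on $\h_\Psi$ with no adjoint calculation needed.

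For part~(4), your intermediate expression $C^{-1}G^{-1}Cf$ need not make sense: writing $f = Dc$ with $c \in \ov{R_C}$, one gets $G^{-1}Cf = G^{-1}Gc = c$, but $c$ need not lie in $R_C = \dom(C^{-1})$. The paper works directly with $D$ instead of $C^{-1}$: from $f = Dc$ one has $D G^{-1} C f = D G^{-1} C D c = D G^{-1} G c = Dc = f$, and unconditional convergence follows (as you correctly note) from Besselness.
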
 
\begin{proof} (1) Performing the same calculation as before, we see that  the map $C$, restricted to the dense domain  $\dom(S^{-1}) =   R_{S}$,  
is an  {isometry} into  ${\h}_{\Psi}$ :
$$
 \ip {C f}{C g}_{\Psi}=\ip {f}{g}_{\Hil}, \; \forall\, f,g\in  R_{S}.
 $$
Thus $C$ extends by continuity to a  unitary map from $\Hil= \ov{R_{S}}$ onto  ${\h}_{\Psi}={\ov{R_{C}^{\Psi}}}^{\Psi} $.
As $C$ is an isometric isomorphism from $\Hil$ onto $\h_{\Psi}$ and is bounded from $\Hil$ onto $R_C$, we have ${\h}_{\Psi} =   R_{C}$.

(2) By definition,   $\norm{\Psi}{c}^2 = \ip{c}{G^{-1} c}_{\ell^2 }= \ip{G^{-1/2} c}{G^{-1/2} d}_{\ell^2 }$ for every $c,d\in  R_C^ \Psi$. 
Since $G^{-1}$ 
is  self-adjoint and positive, and has a bounded inverse, its spectrum is bounded away from 0, hence  $\norm{\Psi}{.}$ is equivalent to the graph norm 
of $G^{-1/2} $, which implies that  $\dom( G^{-1/2} ) = {\h}_{\Psi}$

(3) Since the operator $C: \Hil \to  {\h}_{\Psi}$ is unitary, it can be inverted on ${\h}_{\Psi}$ by its adjoint $C^{*(\Psi)}$, 
which yields the formula  $f = C^{*(\Psi)} C f$, $\forall f\in\Hil$. 
 Furthermore, for every $f\in\Hil$ one has $f=S^{-1}Sf=S^{-1} D  C f$.   

(4)  As $f \in R_D$, $C f \in C(R_D) = R_G = \dom (G^{-1})$ and thus the composition $G^{-1} C$ is well defined. 
Furthermore, 
there exists a $c \in \ov{R_C} = {\sf Ker}(D)^\bot$ with $f = D  c$. So 
\begin{align*}
 \sum_k \left[ G^{-1}  \left( \ip {\psi_k }{f} _\Hil \right) \right] \psi_k = D G^{-1} C f = D G^{-1} C D c = D G^{-1} G c = D c = f.
\\[-18mm]
\makebox[3cm]{}\end{align*}
\end{proof}

\vspace*{5mm}
Exactly as in the continuous case of Section \ref{subsec:unbdd-frames}, we have the following diagram that particularizes \eqref{diagram2}:
\be\label{diagram4}
\begin{array}{cccc}
\Hil       &\stackrel{C}{\longrightarrow }&{\h}_{\Psi}= R_{C} \subset & \!\!\!\!\ov{R_{C}} \subset \ell^{2} \\[1mm]
\cup&                            &\cup& \\[1mm]
\dom(S^{-1}) = R_{S}   &\stackrel{C}{\longrightarrow }  &\qquad R_{C}^{\Psi}\quad \subset & \hspace{-3mm}  \ell^{2}
\end{array}
\end{equation}

We know $G: \ov{R_C} \rightarrow \ov{R_C}$ is bounded and non-negative with $R_G = C(R_D)$, so $G^{1/2}: \ov{R_C} \rightarrow \ov{R_C}$ 
is bounded and non-negative,  with the same kernel.

\begin{cor.} \label{cor:G±1/2}
${G^{1/2}}: \ov{R_C} \rightarrow {\h}_{\Psi}$ is a unitary operator (isomorphism) and so is its inverse 
${G^{-1/2}} : Ê{\h}_{\Psi} \rightarrow \ov{R_C}$. Both operators are positive.

\end{cor.}
\begin{proof} 
As $G$ is positive (by Prop. \ref{prop44} (2)), $G^{1/2}$ is positive, too.
Since $G$ is bounded, the domain of its square root is also $\ov{R_C}$.  Since $\ran({G}^{1/2})$ is closed,  
$ {G}^{1/2} : \ov{R_C} \rightarrow {\h}_{\Psi}$ is unitary. 
Its inverse $ {G}^{-1/2} :{\h}_{\Psi} \rightarrow \ov{R_C}$ is therefore also unitary. 
\end{proof}

\begin{prop.}\label{sec:rkhsunbound1} Let $(\psi_k)$ be a regular upper semi-frame.
Then ${\h}_{\Psi}$ is a reproducing kernel Hilbert space, with kernel given by  the operator $S^{-1}D $, which is a matrix operator,
 namely,  the matrix $\mathcal G$, where  $\mathcal G_{k,l} = \ip{Ê\psi_k}   {S^{-1} \psi_l}   = \ip {\psi_k}  { C^{-1} G^{-1} C Ê\psi_l}  $. 

\end{prop.}
\begin{proof} 
Let $d = Cf\in {\h}_{\Psi}$. Then, as   $Dd\in R(D|_{R_C}) \subset R(S) = \dom(S^{-1})$, we have  

\begin{align*}
\sum_l {\mathcal G}_{k,l} d_l &=\sum_l  \ip{\psi_k}   {S^{-1} \psi_l } d_l = \ip{ÊS^{-1}\psi_k}{\sum_l \psi_l  d_l }= 
\ip{ÊS^{-1}\psi_k}{Dd} =  \ip{Ê\psi_k}{S^{-1}Dd}\\
&=(CS^{-1}D d)_k = (CS^{-1}D Cf)_k =  (Cf)_k = d_k.
\end{align*}
\end{proof}

For $f \in R_S$ we have $f = S S^{-1} f$. So, for a regular upper semi-frame, we can give the reconstruction formula for all $f \in R_S$
$$ f = \sum \left< f , \widetilde \psi_k\right> \psi_k,
 $$ 
where, as usual, $\widetilde \psi_k:= S^{-1} \psi_k$ denotes the canonical dual.

From the results above, we know that 
$G$ is a bounded, positive and bijective operator from $\ov{R_C}$ onto $C(R_D)$. Furthermore $G^{1/2}$ maps $\ov{R_C}$ bijectively onto $R_C$. 
This means that $G^{1/2}$ also maps $R_C$ bijectively on $C(R_D)$.
As  $G C = C D C = C S$, we now know that $G$ maps $R_C$ bijectively onto $C(R_S)$ and so $G^{1/2}$ maps $C(R_D)$ bijectively onto $C(R_S)$.
In summary, we have
\begin{equation}
\label{sec:GGGdia1}
\begin{array}{ccccccccc}
\ov{R_C}       & \stackrel{G^{1/2}}{\longrightarrow } & R_{C}        & \stackrel{G^{1/2}}{\longrightarrow } & C(R_D) & \stackrel{G^{1/2}}{\longrightarrow } & C(R_S), 
\end{array}
\end{equation}
where each operator $G^{1/2}$ is a bijection.

Clearly $\left(D G^{-1} C \right){|_{R_S}} = I{|_{R_S}}$. 
Furthermore as  $D G^{-1/2} C D G^{-1/2} C = D G^{-1/2} G G^{-1/2} C = D C = S$ and $D G^{-1/2} C$ is clearly a positive operator, we have
\begin{equation} \label{sec:rootSG1}
 S^{1/2} = D G^{-1/2} C. 
 \end{equation}
Therefore $S^{1/2}$ maps $\Hil$ bijectively onto $D(\ov{R_C}) = R_D$. And so $S^{1/2}$ maps $R_D$ bijectively onto $R_S$.

With similar arguments, we can show that the following diagram is commutative: 
\be\label{gramframconn1}
\hspace*{-1cm}\begin{minipage}{15cm}
\begin{center}
\setlength{\unitlength}{2pt}
\begin{picture}(100,60)
\put(10,15){$\Hil$}
\put(25,15){$\stackrel{S^{1/2}}{\longrightarrow }$}
\put(40,15){$ R_D$}
\put(55,15){$\stackrel{S^{1/2}}{\longrightarrow }$}
\put(72,15){$R_S$}
\put(90,15){$\stackrel{S^{1/2}}{\longrightarrow }$}
\put(105,15){$S(R_D)$}
\put(10,50){$\ov{R_C}$}  
\put(25,50){$\stackrel{G^{1/2}}{\longrightarrow }$}
\put(40,50){$ R_C$}
\put(55,50){$\stackrel{G^{1/2}}{\longrightarrow }$}
\put(70,50){$ C(R_D)$}
\put(90,50){$\stackrel{G^{1/2}}{\longrightarrow }$}
\put(105,50){$C(R_S)$}
\put(16,45){\vector(1,-1){23}}
\put(46,45){\vector(1,-1){23}}
\put(81,45){\vector(1,-1){23}}
\put(16,22){\vector(1,1){23}}
\put(46,22){\vector(1,1){23}}
\put(81,22){\vector(1,1){23}}
\put(17,35){$\scriptstyle D$}
\put(47,35){$\scriptstyle D$}
\put(82,35){$\scriptstyle D$}
\put(34,35){$\scriptstyle C$}
\put(64,35){$\scriptstyle C$}
\put(99,35){$\scriptstyle C$}
\end{picture}
\end{center}
\end{minipage}
\vspace*{-5mm}
\end{equation}

\noi The reconstruction formula   given in Theorem \ref{theor-43}(4) is valid for every $f\in R_D$. The one in \eqref{ssminus1} is valid 
for every $f\in R_S$.  
With the results given above,  we can give a reconstruction formula valid for all $f \in \Hil$, even in the case  when $\Psi \not\subseteq \dom(S^{-1})$, 
if we allow the analysis coefficents to be altered. 
 
\begin{theorem}\label{sec:reconssynthfull1} 
Let $(\psi_k)$ be an upper semi-frame.
Then, for all $f \in \Hil$, we have the reconstruction formula
$$
 f = S^{-1/2} \sum _k \left[ G^{-1/2} \ip{ \psi_k} {f} \right]\psi_k .$$
\end{theorem}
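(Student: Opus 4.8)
The plan is to recognize the right-hand side as $S^{-1/2}$ applied to $S^{1/2}f$, using the factorization \eqref{sec:rootSG1}, so that everything reduces to $S^{-1/2}S^{1/2}f=f$.

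First I would check that every operation in the displayed formula is legitimate. For $f\in\Hil$ the sequence $\bigl(\ip{\psi_k}{f}\bigr)_k$ is precisely $Cf$, and by Theorem \ref{theor-43}(1)--(2) we have $Cf\in R_C={\h}_{\Psi}=\dom(G^{-1/2})$; hence $G^{-1/2}Cf$ is a well-defined element of $\ov{R_C}\subseteq\ell^2$. Since $(\psi_k)$ is an upper semi-frame, i.e. a total Bessel sequence, the synthesis operator $D$ is bounded, so the series $\sum_k\bigl[G^{-1/2}Cf\bigr]_k\,\psi_k$ converges unconditionally and equals $D\,G^{-1/2}C f$. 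Thus the right-hand side of the theorem is $S^{-1/2}\,D\,G^{-1/2}C f$, provided the argument of $S^{-1/2}$ lies in $\dom(S^{-1/2})$.

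Next I would invoke \eqref{sec:rootSG1}, namely $S^{1/2}=D\,G^{-1/2}C$, which rewrites the argument as $D\,G^{-1/2}C f=S^{1/2}f$. Here $S=DC$ is bounded, positive and injective with dense range, so $S^{1/2}$ is bounded, positive and injective, and $S^{-1/2}$ is to be read as its (unbounded, densely defined, self-adjoint) inverse $(S^{1/2})^{-1}=(S^{-1})^{1/2}$, whose domain is exactly $\ran(S^{1/2})$. In particular $S^{1/2}f\in\dom(S^{-1/2})$ for every $f\in\Hil$, so the formula makes sense, and applying $S^{-1/2}$ gives $S^{-1/2}\bigl(S^{1/2}f\bigr)=f$, which is the asserted identity.

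The only genuine point to nail down is that $D\,G^{-1/2}C f$ indeed belongs to $\dom(S^{-1/2})$; but this is automatic once \eqref{sec:rootSG1} is in hand, since $\dom(S^{-1/2})=\ran(S^{1/2})$ and it was already observed (just before diagram \eqref{gramframconn1}) that $S^{1/2}$ maps $\Hil$ bijectively onto $R_D=\ran(S^{1/2})$. I do not expect any further obstacle: everything else is bookkeeping with operators and domains already assembled in Theorem \ref{theor-43}, Proposition \ref{prop44} and diagram \eqref{gramframconn1}. One may also remark that, in contrast to the reconstruction formulas of \eqref{ssminus1} and Theorem \ref{theor-43}(4), this one holds for \emph{all} $f\in\Hil$ and requires no regularity of $\Psi$, the price being that the analysis coefficients $\ip{\psi_k}{f}$ are first modified by the bounded-away-from-infinity-only operator $G^{-1/2}$ before synthesis, and $S^{-1/2}$ is applied afterward.
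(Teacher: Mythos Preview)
Your proposal is correct and follows exactly the same route as the paper: rewrite the sum as $D\,G^{-1/2}Cf$, invoke \eqref{sec:rootSG1} to recognize it as $S^{1/2}f$, and cancel with $S^{-1/2}$. The paper's own proof is a one-liner doing precisely this; your version simply adds the (entirely appropriate) domain verifications that the paper leaves implicit.
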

\begin{proof} By eq. \eqref{sec:rootSG1},
$$ S^{-1/2} \sum _k \left[ G^{-1/2} \ip{ \psi_k} {f} \right]\psi_k = S^{-1/2} D G^{-1/2} C f = S^{-1/2} S^{1/2} f = f. $$
\end{proof}

For applications and implementations, this is not a very `useful' approach, since it uses an operator-based approach and does not use sequences for the inversion.
For a treatment of the existence of dual sequences  and related questions, we refer to \cite{jpa-Besseq}.

\subsection{Formulation in terms of a Gel'fand triplet}
\label{subsec:discG-triplet}
 
As in the continuous case, the discrete setup may also advantageously be formulated with a  triplet of Hilbert spaces, namely,
\be
 {\h}_{\Psi} \;\subset\; {\h}_{0}= \ov{{\h}_{\Psi}}\;\subset \; {\h}_{\Psi}^{\times}
\label{eq:disctriplet}
\end{equation}
where  
\begin{itemize}
\vspace{-1mm}\item[{\bf .}]  $ {\h}_{\Psi} = R_{C}$, which is a Hilbert space for the norm  $\norm{\Psi}{\cdot}=\ip {\cdot}{C  S^{-1} C^{-1} \cdot }^{1/2} = 
\ip {\cdot}{G^{-1} \cdot }^{1/2}$;
\vspace{-1mm}\item[{\bf .}]   $\ov{{\h}_{\Psi}}$ is the closure of ${\h}_{\Psi}$ in $\ell^{2}$, that is, $\ov{R_C} $;
\vspace{-1mm}\item[{\bf .}]  ${\h}_{\Psi}^{\times}$ is the conjugate dual of ${\h}_{\Psi}$ and the completion of  ${\h}_{0}$ in the norm
$\norm{\Psi}{\cdot}^{\times}:=\ip {\cdot}{CSC^{-1} \cdot }^{1/2} = \ip {\cdot}{G \cdot }^{1/2} $.  
\end{itemize}
 Then, if $\psi_{k}\in \dom(S^{-1}), \, \forall\, k $, all three spaces $ {\h}_{\Psi},{\h}_{0}, {\h}_{\Psi}^{\times}$ are  {reproducing kernel Hilbert spaces}, with the same 
(matrix) kernel  
${\mathcal G}(k,l) = \ip{\psi_{k} }{S^{-1}\psi_{l}}.$
Here too,  ${\h}_{\Psi}^{\times}$ carries the unbounded version of the dual frame.
\medskip

In the triplet \eqref{eq:disctriplet}, ${\h}_{0}$ is a sequence space contained in $\ell^2$ (possibly $\ell^2$ itself), ${\h}_{\Psi}$ is a smaller sequence space, for instance a space
of \emph{decreasing} sequences. But  ${\h}_{\Psi}^{\times}$ is the K\"{o}the dual of ${\h}_{\Psi}$, normally not contained in $\ell^2$. In the example below, 
${\h}_{\Psi}^{\times}$ consists of slowly \emph{increasing} sequences.

 \subsection{Lower semi-frames and duality}
\label{subsec:simplexamples} 

{As announced in Section \ref{subsec:upperlower}, the notion of upper semi-frame has a dual, that of lower semi-frame.
Thus, particularizing \eqref{eq:lowersf}, we say that  a    sequence $\Phi=\{\phi_{k}\}$
 is a \emph{lower semi-frame} if  it satisfies the lower frame condition, that is, there exists   a constant ${\sf m}>0$ such that
\be
{\sf m}  \norm{}{f}^2 \le  \sum _k |\ip{\phi_{k}}{f}| ^2 \, , \;\; \forall \, f \in \Hil.
\label{eq:disclowersf} 
\end{equation}
Clearly, \eqref{eq:disclowersf} implies that the family $\Phi $ is total in $\Hil$. Notice there is a slight dissimilarity between the two definitions of semi-frames.
In the upper case \eqref{eq:discr-unbddframe}, the positivity requirement on the left-hand side ensures that the sequence $\Psi$ is total, whereas here, it follows automatically from the lower frame bound. 
Before exploring further the duality between the two notions, let us give some simple examples.}

 Let $(e_k)$ be an orthonormal basis in $\Hil$   with index set $\mathbb{N}$ 
 (we have to stick to infinite-dimensional spaces, since every sequence in $\CC^N$  is a frame sequence, so there are no upper semi-frames which are not frames). 
 Let $\psi_k = \frac{1}{k} e_k$. Then $(\psi_k)$ is an upper semi-frame :
$$
0 <  \sum_k | \ip{\psi_k }{f}|^2 \leq \sum _k | \ip{e_k }{f}|^2 = \|f\|^2.
$$
Indeed, there is no lower frame bound, because for $f=e_{p}$, one has $\sum \limits_k | \ip{\psi_k }{f}|^2=\frac{1}{p^2}$.

  Let  $\phi_{k}= k\, e_{k}$. The sequence $(\phi_{k})$ is dual to $(\psi_k)$, since one has
$$
\sum_k \ip{\psi_k }{f} \phi_{k} = f.
$$
In addition, we have
$$
\sum _k | \ip{e_k }{f}|^2 = \|f\|^2 \leq \sum_k | \ip{\phi_k }{f}|^2,
$$
and this is unbounded since $\sum_k | \ip{\phi_k }{f}|^2 = p^2$ for $f=e_{p}$. Hence,  $(\phi_{k})$ is a lower semi-frame, dual to  $(\psi_k)$, and it lives also 
in ${\h}_{\Psi}$. 

  In this case, the frame operator associated to $(\psi_k)$  is $S= \mathrm{diag}(\frac{1}{n^2})$. Thus $S^{-1}= \mathrm{diag}({n^2})$, which is clearly unbounded,
and the inner products are, respectively :
\begin{itemize}
\vspace{-1mm}\item[{\bf .}] For ${\h}_{\Psi}: \quad\ip{c}{d}_{\Psi} = \sum_{n}{n^2}\, \ov{c_{n}}\,d_{n}$; 
\vspace{-1mm}\item[{\bf .}]   For ${\h}_0: \quad\;\ip{c}{d}_{0} =\sum_{n}  \ov{c_{n}}\,d_{n}$; 
\vspace{-1mm}\item[{\bf .}]   For ${\h}_{\Psi}^{\times}: \quad\ip{c}{d}_{\Psi}^{\times} =\sum_{n} \frac{1}{n^2} \,\ov{c_{n}}\,d_{n}$. 
 \end{itemize}
It follows that $(\phi_{k})$ is the canonical dual of $(\psi_k)$, since $\phi_{k} = S^{-1} \psi_k $.
 The sequence used by Gabor  in his original IEE-paper \cite{gabor}, a Gabor system with a Gaussian window, $a=1$ and $b=1$, is exactly 
such an upper semi-frame.  This example has been analyzed in great detail by Lyubarskii and Seip \cite{lyu-seip}. Interestingly enough,
the technical tool used there is a scale of Hilbert spaces interpolating between the Schwartz spaces $\mathcal S$ and  $\mathcal S'$, one of the simplest
examples of partial inner product spaces \cite{jpa-pipspaces}.

The example $(\frac{1}{k}e_k), (ke_k)$ can be generalized to weighted  sequences   $(m_k e_k)$, for adequate weights $m_{k}$. We refer to
\cite{jpa-Besseq,stobal09,bstable09} for additional information.
\medskip

In the case of discrete semi-frames, the duality between upper and lower ones has been studied in several papers, e.g. \cite{casoleli1}.
Here we simply note two results.
First, we have the equivalent of Lemma \ref{lem:analop}:

\belem   {\rm \cite[Lemma 3.1]{casoleli1}}
Given any total family $\Phi =\{\phi_{k}\}$, the associated analysis operator $C$ is closed. Then $\Phi $ satisfies the lower frame condition,
 i.e. it is a lower semi-frame,  if and only if $C$ has closed range and is injective.
\label{lem:discranalop}
\enlem
Then, as in Proposition \ref{prop:duality},  the main result is the following:
 \begin{prop.} {\rm \cite[Prop. 3.4]{casoleli1}}
Let $\Phi =\{\phi_{k}\}$ be any total family in $\Hil$. Then $\Phi $ is a lower semi-frame if and only if there exists an upper semi-frame
$\Psi $ dual to $\Phi $, in the sense that
$$
f = \sum_{k}  \ip{\phi_{k}}{f} \, \psi_{k}   , \;\; \forall\, f\in \dom(C).
$$
\label{prop:discrduality}  
\end{prop.}
\vspace*{-5mm} In conclusion,  there is an (almost) complete symmetry between upper and lower semi-frames. 
Further results along these  lines may be found in \cite{jpa-Besseq,BSA,casoleli1} to which we refer.

\subsection{Generalization of discrete frames}
\label{subsec:rankn-discrframes}

Rank-$n$ frames were introduced in \cite[Sec.2]{jpa-contframes} in the general case of a measure space $(X,\nu)$. This consists essentially of a collection of
$n$-dimensional subspaces, one for each $x\in X$, with basis  $\{{\psi}^{i}_{x}\}, i = 1, 2, \ldots , n < \infty$, and for which
   there exist constants ${\sf m} > 0$  and ${\sf M}<\infty$  such that 
$$
{\sf m}  \norm{}{f}^2  \le \sum_{i=1}^n\int_{X}  |\ip{\psi_{x}^{i}}{f}| ^2 \, \ud \nu(x) \le {\sf M}  \norm{}{f}^2 ,  \forall \, f \in \Hil.
$$
Now, in the purely discrete case, $X$ a discrete set, this obviously reduces to an ordinary frame. Yet there are plenty of nontrivial generalizations, 
as soon as one attributes weights to the various subspaces. 

The first step is to consider \emph{weighted frames}, studied in \cite{weightedfr}. Given a set of positive weights $v_{k}>0$, the family 
$\{{\psi}_{k}, k\in \}$ is a weighted frame if
$$
{\sf m}  \norm{}{f}^2  \le \sum_{k\in \Gamma}  v_{k}^2\, |\ip{\psi_{k}}{f}| ^2   \le {\sf M}  \norm{}{f}^2 ,  \forall \, f \in \Hil.
$$
Suppose now the weights are constant by blocks of size $n$, so that one has
$$
{\sf m}  \norm{}{f}^2  \le \sum_{j\in J}  v_{j}^2 \sum_{i=1}^n |\ip{\psi_{ij}}{f}| ^2   \le {\sf M}  \norm{}{f}^2 ,  \forall \, f \in \Hil.
$$
Then, for each $j$, the family  $\{\psi_{ij}, i = 1, 2, \ldots , n \}$ is   a frame for its span, namely, the $n$-dimensional subspace
$ \Hil_{j}$.  Call $\pi_{\Hil_{j}}$ the corresponding orthogonal projection. 
Let $A_j, B_j$ be the frame bounds, and   assume  that  $A := \inf_j A_j > 0$ and $B := \sup_j B_j < \infty$.
So
$A_j \norm{}{\pi_{\Hil_{j}} f}^2 \le \sum_{i=1}^n |\ip{\psi_{ij}}{f}| ^2 \le B_j \norm{}{\pi_{\Hil_{j}} f}^2$. 
Now let ${\sf m'} = \frac{\sf m}{B}$ and ${\sf M'} = \frac{\sf M}{A}$, then we get
$$
{\sf m'}  \norm{}{f}^2  \le \sum_{j\in J} v_{j}^2 \,\norm{}{\pi_{\Hil_{j}}f}^2  \le {\sf M'}  \norm{}{f}^2 ,  \forall \, f \in \Hil.
$$

In that case, the family $\{\Hil_{j}\}_{ j\in J} $ is a \emph{frame of subspaces} with respect to the weights $\{v_{j} \} _{ j\in J} $, 
a notion introduced by Casazza and Kutyniok \cite{Casaz-kutyn}, later called \emph {`fusion frames'} (see also \cite{asgari,Casaz2} and \cite{sun}). Actually, in the general definition, the subspaces
 $\{\Hil_{j}\}_{ j\in J} $ are closed subspaces of $\Hil$, of arbitrary dimension. This structure nicely generalizes frames, in particular, it yields an associated analysis, 
synthesis and frame operator and a dual object.

Given the family $\{\Hil_{j}\}_{ j\in J}  $, build their direct sum
$$
\Hil^{\oplus} := \bigoplus_{ j\in J} \Hil_{j} = \{ \{f_{j}\}_{ j\in J }: f_{j}\in \Hil_{j}, \sum_{j\in J}\norm{}{f_{j}}^2 < \infty\}\}\, .
$$
Then one considers:
\bei
\item[(i)] The \emph{synthesis operator} $C_{W,v} : \Hil^{\oplus} \to \Hil$ defined by
$$
C_{W,v}f = \sum_{ j\in J} v_{j}\,f_{j} \, ,\; \mbox{ for all }\;f = \{f_{j}\}\in \Hil^{\oplus}.
$$
Note that the series on the r.h.s. converges unconditionnally.  
\item[(ii)] The \emph{analysis operator} $D_{W,v} = {C_{W,v}}^{^{\scriptstyle\!\!\!\!\!\ast}} : \Hil \to \Hil^{\oplus}$, which is given by
$$
D_{W,v}f = \{v_{j}\,\pi_{\Hil_{j}} f\}_{ j\in J }.
$$

\item[(iii)] The \emph{frame operator} $S_{W,v} : \Hil \to \Hil$ given, as usual, by $S_{W,v}=  {C_{W,v}}^{^{\scriptstyle\!\!\!\!\!\ast}}  \;C_{W,v} $, so that
$$
S_{W,v}f =  \sum_{ j\in J} v_{j}^2 \, \pi_{\Hil_{j}} f.
$$
\eni
Most of the standard results about ordinary frames extend to frames of subspaces, for instance:
\bei
\item[(i)]  \emph{Duality} : the dual  of $\{\Hil_{j}\} _{j\in J} $ is $\{S_{W,v}^{-1}\Hil_{j}\} _{j\in J} $. 
This is a frame of subspaces with the same weights.
\item[(ii)] \emph{Reconstruction formula:}
$$
f= \sum_{ j\in J} v_{j}^2 \, S_{W,v}^{-1}\, \pi_{\Hil_{j}} f, \; \forall \, f \in \Hil.
$$
\eni
In view of this situation, it is clear that our whole analysis of upper and lower semi-frames made in Section \ref{subsec:unbdd-discrframe} extends as well.

Further generalizations have been introduced, namely \emph{g-frames} \cite{khosravi,sun}. 
A parallel analysis can be made, but we will refrain from doing it here.

\section*{Acknowledgements}

The authors would like to thank D. Stoeva and O. Christensen for helpful comments and suggestions.  
This work was partly supported by the WWTF project MULAC (`Frame Multipliers: Theory and Application in Acoustics', MA07-025).
The first author acknowledges gratefully the hospitality of  the Acoustics Research Institute, Austrian Academy of Sciences,  Vienna, 
and so does the second author towards the  Institut de Recherche en Math\'ematique et  Physique, Universit\'e catholique de Louvain.

\section*{References}

\end{document}